\newtheorem{thm}{Theorem} [section]
\newtheorem{lem}{Lemma} [section]
\newtheorem{cor}{Corollary} [section]
\newtheorem{remark}{Remark}[section]
\numberwithin{equation}{section}
\newcommand{\R}{\mathbb R}
\newcommand{\ml}{\mathfrak{L}}
\newcommand{\n}{\nabla}
\newcommand{\ld}{\lambda}
\newcommand{\ldp}{\lambda^{(p)}}
\newcommand{\dpx}{\Delta_{p,x}}
\newcommand{\dpf}{\Delta_{p,f}}
\newcommand{\dvolx}{e^{-\frac{|x|^2}{2}}\mbox{dvol}}
\newcommand{\dvolf}{e^{-f}\mbox{dvol}}
\newcommand{\intm}{\int_{M^m}}
\def\ba{\begin{aligned}}
\def\ea{\end{aligned}}
\newcommand{\scal}[2]{\langle{#1},{#2}\rangle}
\newcommand{\abs}[1]{\left|#1\right|}
\begin{document}
\title { Inequalities for eigenvalues of the weighted  Hodge Laplacian}
\author{ Daguang Chen$^*$ and Yingying Zhang}
\thanks{$^*$ This work of the first named author was partially supported by NSFC grant No. 11101234.}
\email{dgchen@math.tsinghua.edu.cn}

\keywords{Eigenvalues, Weighted Hodge Laplacian, Universal inequalities, Self-shrinker}
\subjclass[2000]{35P15; 58J50; 58C40; 58A10}

\address{
Department  of Mathematical Sciences,
Tsinghua University,
Beijing 100084, P.R. China.}

\email{yiz308@lehigh.edu}
\address{
Department of Mathematics,
Lehigh University,
Bethlehem, PA USA 18015.}

\maketitle

\begin{abstract} In this paper, we obtain "universal" inequalities for eigenvalues of the weighted Hodge Laplacian on a compact self-shrinker of Euclidean space. These inequalities generalize the Yang-type and Levitin-Parnovski inequalities for eigenvalues of the Laplacian and Laplacian. From the recursion formula of Cheng and Yang \cite{ChengYang07},
 the Yang-type inequality  for eigenvalues of the weighted Hodge Laplacian  are optimal in the sense of the order of eigenvalues.
\end{abstract}

\section{Introduction}
Let $M^m$ be an $m$-dimensional complete Riemannian manifold
and  $\Omega$ be a bounded domain in $M^m$.
The Dirichlet  eigenvalue problem of Laplacian is given by
\begin{equation}\label{DL}
\left \{ \aligned &\Delta u=-\ld u,\qquad \text{in $\Omega$} \\
&u=0, \qquad\qquad \text{on $\partial \Omega$}. \endaligned
\right.
\end{equation}
 It is well known that the spectrum of this problem is real
 and discrete:
  $$
 0<\lambda_1< \lambda_2\leq \lambda_3\leq \cdots\nearrow\infty,
$$
where each $\lambda_i$ has finite multiplicity which is repeated
according to its multiplicity.

The main
developments were obtained by ~Payne, P\'olya and ~Weinberger \cite{PPW56}, ~Hile and ~Protter \cite{HP} and Yang \cite{Yang}.  In 1956,
~Payne, P\'olya and ~Weinberger \cite{PPW56} proved that
\begin{equation}\label{PPW}
\lambda_{k+1}-\lambda_k
 \leq \frac4{mk}\sum_{i=1}^k\lambda_i.
\end{equation}
 In 1980,  ~Hile and ~Protter \cite{HP} improved (\ref{PPW}) to
\begin{equation}\label{HP}
 \sum_{i=1}^k\frac{\lambda_i}{\lambda_{k+1}-
 \lambda_i}\geq \frac{mk}4.
\end{equation}
In 1991, Yang (see \cite{Yang} and more recently \cite{ChengYang05})  obtained a very sharp inequality
\begin{equation}\label{Y1}
\sum_{i=1}^k(\lambda_{k+1}-\lambda_i)(\lambda_{k+1}-(1+\frac4m)\lambda_i)\leq 0.
\end{equation}

There has been much work dedicated to extending and strengthening
the classical inequalities of Payne-P\'{o}lya-Weinberger,
Hile-Protter and Yang. When $M^m$ is an $m$-dimensional compact
manifold, there are similar results about the eigenvalue
estimates for the ~Laplacian (see, e.g.\cite{LiP,ChengYang05,LeungPF2,ChenCheng08,SHI,YangYau}).
For the compact Riemannian manifolds  isometrically immersed in an Euclidean
space or a sphere, J. M. Lee \cite{Lee} proved  Hile-Protter type bounds
for eigenvalues for Hodge Laplacian on $p$-forms.
In 2002, B. Colbois \cite{Col03} derived a Payne-P\'{o}lya-Weinberger type inequality for
the rough Laplacian. In \cite{IM09}, S.~Ilias and O.~Makhoul obtained inequalities  for the eigenvalues of the Hodge Laplacian.

In 1991, N. Anghel \cite{Anghel} obtained the analogous estimate of (1.2) for
the Dirac operator. In 2009, the Yang-type inequality (\ref{Y1}) was extended to the eigenvalues of Dirac operator by the first  author in \cite{Chen09}.

In \cite{Harl88},  Harrell gave an abstract algebraic argument involving operators, their commutators and traces, which generalize the original PPW arguments. These algebraic ideas were developed in
different contexts to produce many new universal eigenvalues inequalities (see \cite{AshbHer1,HarlMich94,Harl07,HarlStub97,HarlStub10,LP02}).

In present paper, making use of  a theorem  of Ashbaugh and Hermi \cite{AshbHer1}, we obtain the Yang-type inequality for higher order eigenvalues of the weighted Hodge Laplacian  for submanifolds in Euclidean space.

\begin{thm} \label{MainThm1} Let $x:(M^m,g)\longrightarrow (\R^n,{\rm can})$ be a compact self-shrinker,  $\dpx=\Delta_H+\frac12\mathcal{L}_{\n |x|^2}$ (see below  (\ref{CM-operator2})) be  the weighted Hodge Laplacian acting on $p$-forms over $M^m$. Assume that  $\Big\{\ldp_i\Big\}_{i=1}^\infty$ are the eigenvalues of $\dpx$ and $\{\varphi_i\}_{i=1}^\infty$ is a corresponding orthonormal basis of $p$-eigenforms.  We have, for any $p \in \left\{0,1,\dots,m\right\}$,
\begin{equation}\label{HL-Eig1}
\begin{aligned}
m\sum_{i=1}^k\left(\ldp_{k+1}-\ldp_i\right)^2
\leq&\sum_{i=1}^k\left(\ldp_{k+1}-\ldp_i\right)\left(4\ldp_i+2m\right.\\
&-\intm|x|^2|\varphi_i|^2\dvolx\\
&-4\intm\langle\mathfrak{Ric}\varphi_i,\varphi_i\rangle\dvolx\\
&+\left. 4\intm\langle\n(\n\frac{|x|^2}{2})\varphi_i,\varphi_i\rangle\dvolx\right)\\
\end{aligned}
\end{equation}
where $\{e_i\}_{i=1}^{m}$ is a local orthonormal basis of $TM^m$ with respect to the induced
metric $g$ and $\mathfrak{Ric}=-\omega^i\wedge \imath(e_j)R(e_i,e_j)$ (see also \ref{Ric-Cur}) is the curvature operator acting on $p$-forms.
\end{thm}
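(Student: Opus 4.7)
The plan is to apply the Ashbaugh--Hermi abstract commutator inequality \cite{AshbHer1} to the self-adjoint operator $A=\dpx$ on the weighted Hilbert space $L^2(M,\dvolx)$ of $p$-forms, using as the family of ``perturbing'' operators the multiplications by the ambient coordinate functions $B_\alpha\varphi=x_\alpha\varphi$ for $\alpha=1,\dots,n$. Each $B_\alpha$ is symmetric on $L^2(M,\dvolx)$, and for each pair $(i,\alpha)$ with $1\le i\le k$ the trial $p$-form
$$
\psi_{i,\alpha}=x_\alpha\varphi_i-\sum_{j=1}^{k}\Bigl(\intm x_\alpha\,\langle\varphi_i,\varphi_j\rangle\dvolx\Bigr)\varphi_j
$$
is orthogonal to $\mathrm{span}\{\varphi_1,\dots,\varphi_k\}$. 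Substituting each $\psi_{i,\alpha}$ into the Rayleigh--Ritz characterisation of $\ldp_{k+1}$ and summing over $i$ and $\alpha$ produces the Harrell--Stubbe/Ashbaugh--Hermi-type inequality
$$
\sum_{i=1}^{k}(\ldp_{k+1}-\ldp_i)^2\sum_{\alpha=1}^{n}\langle[B_\alpha,[\dpx,B_\alpha]]\varphi_i,\varphi_i\rangle\le 2\sum_{i=1}^{k}(\ldp_{k+1}-\ldp_i)\sum_{\alpha=1}^{n}\|[\dpx,B_\alpha]\varphi_i\|^2.
$$

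The two commutator expressions must now be computed explicitly. Using the Weitzenb\"ock decomposition $\Delta_H=\n^*\n+\mathfrak{Ric}$ (so $[\Delta_H,f]=[\n^*\n,f]$, since $\mathfrak{Ric}$ is a zero-order tensor) together with the derivation identity $[\mathcal{L}_V,f]=V(f)$ on forms, I find for any smooth $f$ on $M^m$ and any $p$-form $\omega$
$$
[\dpx,f]\omega=-2\n_{\n f}\omega-(\Delta f)\omega+\tfrac12\langle\n|x|^2,\n f\rangle\omega.
$$
Setting $f=x_\alpha$ and invoking the self-shrinker equation $H=-\tfrac12 x^\perp$ --- componentwise, $\Delta x_\alpha=-\tfrac12 x_\alpha+\tfrac14\langle\n x_\alpha,\n|x|^2\rangle$ --- eliminates $\Delta x_\alpha$ in favour of $x_\alpha$ and $\n|x|^2$. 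The double commutator simplifies dramatically: only the first-order piece contributes, yielding $[B_\alpha,[\dpx,B_\alpha]]\omega=2|\n x_\alpha|^2\omega$. Summing over $\alpha$ and applying the ambient identity $\sum_\alpha|\n x_\alpha|^2=m$ produces the factor $2m$ that (after dividing by $2$) gives the coefficient $m$ on the left-hand side of (\ref{HL-Eig1}).

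The right-hand side of (\ref{HL-Eig1}) is then obtained by expanding $\sum_\alpha\|[\dpx,x_\alpha]\varphi_i\|^2$ from the commutator formula above, invoking the ambient identities $\sum_\alpha x_\alpha^2=|x|^2$, $\sum_\alpha x_\alpha\n x_\alpha=\tfrac12\n|x|^2$, $\sum_\alpha|\n x_\alpha|^2=m$, and $\sum_\alpha|\n_{\n x_\alpha}\varphi_i|^2=|\n\varphi_i|^2$, and then applying the weighted Bochner formula to re-express $\intm|\n\varphi_i|^2\dvolx$ in terms of $\ldp_i$, $\intm\langle\mathfrak{Ric}\varphi_i,\varphi_i\rangle\dvolx$ and the Hessian contribution $\intm\langle\n(\n\tfrac{|x|^2}{2})\varphi_i,\varphi_i\rangle\dvolx$. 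The main obstacle is the bookkeeping: after all these substitutions one has to combine the several resulting scalar and quadratic contributions --- including cross terms handled by the weighted divergence theorem $\intm(\mathrm{div}\,X-\langle X,\n\tfrac{|x|^2}{2}\rangle)\dvolx=0$ --- so that, with correct signs throughout, they collapse into precisely the combination $4\ldp_i+2m-\intm|x|^2|\varphi_i|^2\dvolx-4\intm\langle\mathfrak{Ric}\varphi_i,\varphi_i\rangle\dvolx+4\intm\langle\n(\n\tfrac{|x|^2}{2})\varphi_i,\varphi_i\rangle\dvolx$ appearing in (\ref{HL-Eig1}).
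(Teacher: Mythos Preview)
Your overall strategy matches the paper's proof exactly: apply the Ashbaugh--Hermi inequality (Theorem~\ref{AH}) with $\mathcal{A}=\dpx$ and $\mathcal{B}_\alpha=x_\alpha$, compute the commutator $[\dpx,x_\alpha]\varphi=(\mathfrak{L}\,x_\alpha)\varphi-2\nabla_{\nabla x_\alpha}\varphi$, use $\sum_\alpha|\nabla x_\alpha|^2=m$ on the left, and on the right expand $\sum_\alpha\|[\dpx,x_\alpha]\varphi_i\|^2$ and replace $\intm|\nabla\varphi_i|^2\dvolx$ via the weighted Weitzenb\"ock identity~(\ref{WB}).

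There is, however, a normalization slip that would derail the constants. In this paper the self-shrinker equation is $H=-x^\perp$ (equation~(\ref{shrinker})), not $H=-\tfrac12 x^\perp$; combined with the paper's sign convention ($\Delta$ is the \emph{positive} Laplacian), this gives the clean identity $\mathfrak{L}\,x_\alpha=x_\alpha$ (equation~(\ref{Lap-position})), so the commutator reduces simply to $[\dpx,x_\alpha]\varphi=x_\alpha\varphi-2\nabla_{\nabla x_\alpha}\varphi$. Your formula $\Delta x_\alpha=-\tfrac12 x_\alpha+\tfrac14\langle\nabla x_\alpha,\nabla|x|^2\rangle$ is off by a factor of two and would leave a residual $\tfrac14\langle\nabla x_\alpha,\nabla|x|^2\rangle$ term, so the ``bookkeeping'' you describe would not in fact collapse to the combination in~(\ref{HL-Eig1}). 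Once you use the correct normalization, the cross term $\sum_\alpha\intm x_\alpha\langle\varphi_i,\nabla_{\nabla x_\alpha}\varphi_i\rangle\dvolx$ is handled exactly as you indicate, by the weighted divergence theorem together with~(\ref{ssf1}), and the computation goes through without difficulty.
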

\begin{remark}
  When $p=0$, i.e., $\ld_i:=\ld^{(0)}_i$ are the eigenvalues of the operator $\ml:=\Delta_{0,x}=\Delta+\langle x,\cdot\rangle$ acting on scalar functions, we have
  \begin{equation}\label{ChengPeng}
  \begin{aligned}
   m\sum_{i=1}^k\left(\ld_{k+1}-\ld_i\right)^2\leq &\sum_{i=1}^k\left(\ld_{k+1}-\ld_i\right)\left(4\ld_i+2m-\intm|x|^2|\varphi_i|^2\dvolx\right)\\
   \leq& \sum_{i=1}^k\left(\ld_{k+1}-\ld_i\right)\left(4\ld_i+2m-\min_{M^n}|x|^2\right),
  \end{aligned}
  \end{equation}
  which is  Theorem 1.1 in \cite{ChengPeng}. Therefore, Theorem \ref{MainThm1} generalizes  eigenvalue estimates from the operator $\ml$  to  the weighted Hodge Laplacian $\dpx$.
\end{remark}

\begin{remark}
   If $\abs{x}=c,(c>0)$, the manifold $M^m$ is a submanifold of sphere $\mathbb S^{n-1}(\frac{1}{c})$ in Euclidean space $\R^n$. Furthermore, the weighted Hodge Laplacian $\dpx$ is reduced to the ordinary one.
\end{remark}

For a compact self-shrinker (see (\ref{shrinker})) in Euclidean space, we have
\begin{cor}\label{SH-inq1} Let $x:(M^m,g)\longrightarrow (\R^n,{\rm can})$ be a compact self-shrinker, $H, h$ be the second fundamental form and the mean curvature of the immersion $x$, respectively.     We have, $p \in \left\{1,\dots,m\right\}$,
\begin{equation}\label{HL-Eig2}
 \ba &\sum_{i=1}^k(\ldp_{k+1}-\ldp_i)^2\\
\leq&\frac 4m\sum_{i=1}^k\left(\ldp_{k+1}-\ldp_i\right)\left[\ldp_i+\frac m2+1\right.\\
&\left.+\intm\left(p|H||h|-\Phi(H,h)-\frac14|x|^2\right)|\varphi_i|^2\dvolx\right]\\
\leq &\frac 4m\sum_{i=1}^k\left(\ldp_{k+1}-\ldp_i\right)\left[\ldp_i+\frac m2+1\right.\\
&\left. +\max_{M^m}\left(p|H||h|-\Phi(H,h)-\frac14|x|^2\right)\right],
\ea\end{equation}
where $ \Phi(H,h)$ is a function depending on the second fundamental form $h$ and the mean curvature $H$  defined in (\ref{Cur-est1}).
\end{cor}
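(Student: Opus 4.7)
The plan is to deduce Corollary~\ref{SH-inq1} from Theorem~\ref{MainThm1} by producing pointwise bounds, in terms of $|H|$, $|h|$, $|x|$ and the function $\Phi(H,h)$ of (\ref{Cur-est1}), on the two geometric quantities that appear in (\ref{HL-Eig1}) but not in (\ref{HL-Eig2}): the Weitzenbock curvature term $\langle\mathfrak{Ric}\varphi_i,\varphi_i\rangle$ and the Hessian term $\langle\nabla(\nabla\tfrac{|x|^2}{2})\varphi_i,\varphi_i\rangle$. Once these are controlled, dividing (\ref{HL-Eig1}) by $m$ and regrouping constants yields the first inequality of (\ref{HL-Eig2}), and the second follows by replacing the integrand with its pointwise supremum over $M^m$ and invoking the $L^2$-normalization $\int_{M^m}|\varphi_i|^2\dvolx=1$.

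For the Hessian, I would differentiate $\nabla^M(\tfrac{|x|^2}{2})=x^T$ and use the ambient identity $D_{e_i}x=e_i$ together with the Gauss formula to get
$$\nabla^2_{e_i,e_j}\tfrac{|x|^2}{2}=g(e_i,e_j)+\langle h(e_i,e_j),x^\perp\rangle.$$
The self-shrinker equation (\ref{shrinker}) then converts $x^\perp$ into a multiple of the mean curvature vector $H$, so the symmetric endomorphism $\nabla(\nabla\tfrac{|x|^2}{2}):TM\to TM$ equals the identity plus a shape-operator-type tensor whose operator norm is controlled by $|H||h|$. Extending this endomorphism to $\Lambda^p T^*M$ as prescribed by the Lie-derivative part of $\dpx=\Delta_H+\tfrac12\mathcal{L}_{\n|x|^2}$ and applying Cauchy--Schwarz slot by slot gives a pointwise upper bound of the form $c_1+c_2\,p|H||h|$ on $\langle\nabla(\nabla\tfrac{|x|^2}{2})\varphi_i,\varphi_i\rangle/|\varphi_i|^2$ with explicit constants $c_1,c_2$. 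In parallel, since $\R^n$ is flat, the Gauss equation $R_{ijkl}=\langle h_{il},h_{jk}\rangle-\langle h_{ik},h_{jl}\rangle$ reduces $\mathfrak{Ric}=-\omega^i\wedge\imath(e_j)R(e_i,e_j)$ to an expression quadratic in the second fundamental form, and following the manipulations carried out in~\cite{Lee} and~\cite{IM09} one rearranges this expression to isolate the combination $\Phi(H,h)$ of (\ref{Cur-est1}) and produces a matching lower bound for $\langle\mathfrak{Ric}\varphi_i,\varphi_i\rangle$ in which the cross-term with $|H||h|$ appears with exactly the right sign.

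The main obstacle will be the combinatorial bookkeeping needed to pin down the numerical constants: the identity contributions and the cross-terms coming from the Hessian and from the Weitzenbock operator must conspire so that, together with the $2m$ already present in (\ref{HL-Eig1}), they leave precisely the additive $\tfrac{m}{2}+1$ and the single combination $p|H||h|-\Phi(H,h)$ asserted in (\ref{HL-Eig2})---rather than $p$ or $m-p$ multiples thereof, as a naive derivation-extension estimate on $\Lambda^p T^*M$ might suggest. Once the two pointwise bounds have been calibrated correctly, substitution into (\ref{HL-Eig1}), division by $m$, and taking the pointwise maximum over $M^m$ complete the proof.
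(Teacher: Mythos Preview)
Your approach is correct and essentially identical to the paper's: the paper isolates your two pointwise estimates as separate lemmas---the Hessian bound (\ref{n5}) (via $\tfrac12|x|^2_{,ij}=\delta_{ij}+\langle h^\alpha_{ij}e_\alpha,x\rangle$, the self-shrinker relation $x^\perp=-H$, and the derivation-extension bound $|\langle T\varphi,\varphi\rangle|\le p|T||\varphi|^2$) and the curvature bound (\ref{Cur-est}) from \cite{Grosj,LeungPF1}---then substitutes both into (\ref{HL-Eig1}) and divides by $m$. Your worry about the constants is unfounded: the identity part of the Hessian contributes exactly $1\cdot|\varphi_i|^2$, so after multiplying by $4$ and adding the existing $2m$ you get $2m+4=4(\tfrac m2+1)$, and the derivation bound gives the single factor $p$ (not $m-p$), yielding precisely $p|H||h|-\Phi(H,h)-\tfrac14|x|^2$.
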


From Theorem \ref{MainThm1}, we can obtain the spectral gaps of the consecutive eigenvalues of the weighted Hodge Laplacian $\dpx$.
\begin{cor}Under the same assumption in Corollary \ref{SH-inq1}, we have
\begin{equation*}
\ba
\ldp_{k+1}-\ldp_k\leq &2\left[\left(\frac{2}{m}\frac1k\sum_{i=1}^k\ldp_i+\frac2m+\frac2m\max_{M^m}\left(p|H||h|
-\Phi(H,h)-\frac14|x|^2\right)\right)^2     \right.\\
&-\left.\left(1+\frac4m\right)\frac1k\sum_{j=1}^k\left(\ldp_j
-\frac1k\sum_{i=1}^k\ldp_i\right)^{2}\right]^{\frac12}
\ea
\end{equation*}
\end{cor}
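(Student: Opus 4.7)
The plan is to view the Yang-type inequality of Corollary \ref{SH-inq1} as a quadratic inequality in $\ldp_{k+1}$ and to extract the spectral gap via the standard Cheng--Yang algebraic manipulation. Abbreviate
\[
D:=\tfrac{m}{2}+1+\max_{M^m}\!\bigl(p|H||h|-\Phi(H,h)-\tfrac14|x|^2\bigr),
\]
so that the second inequality in Corollary \ref{SH-inq1} reads $\sum_{i=1}^{k}(\ldp_{k+1}-\ldp_i)^{2}\le \tfrac{4}{m}\sum_{i=1}^{k}(\ldp_{k+1}-\ldp_i)(\ldp_i+D)$. This is already a Yang-type inequality modulo an affine shift, so the gap estimate will come out by rewriting it as a quadratic in the unknown $\ldp_{k+1}$.

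The next step is to expand both sides, introduce the abbreviations $\bar\lambda:=\tfrac{1}{k}\sum_{i=1}^{k}\ldp_i$ and $\sigma^{2}:=\tfrac{1}{k}\sum_{j=1}^{k}(\ldp_j-\bar\lambda)^{2}$, and divide through by $k$. After collecting terms the inequality rearranges into the quadratic $(\ldp_{k+1})^{2}-2A\,\ldp_{k+1}+B\le 0$ with
\[
A=\bigl(1+\tfrac{2}{m}\bigr)\bar\lambda+\tfrac{2D}{m},\qquad B=\bigl(1+\tfrac{4}{m}\bigr)(\bar\lambda^{2}+\sigma^{2})+\tfrac{4D}{m}\bar\lambda,
\]
and a routine computation using $(1+\tfrac{2}{m})^{2}-(1+\tfrac{4}{m})=\tfrac{4}{m^{2}}$ collapses the discriminant to the clean form
\[
A^{2}-B=\bigl(\tfrac{2}{m}(\bar\lambda+D)\bigr)^{2}-\bigl(1+\tfrac{4}{m}\bigr)\sigma^{2}.
\]
Hence $\ldp_{k+1}$ is forced into the closed interval $[A-\sqrt{A^{2}-B},\,A+\sqrt{A^{2}-B}]$, whose length is $2\sqrt{A^{2}-B}$; unwinding $\tfrac{2}{m}(\bar\lambda+D)$ through the definition of $D$ reproduces the expression appearing inside the square bracket in the statement of the corollary.

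The final step is to promote this two-sided control on $\ldp_{k+1}$ to the announced bound on $\ldp_{k+1}-\ldp_k$. The strategy is to establish a matching lower bound $\ldp_k\ge A-\sqrt{A^{2}-B}$, so that both $\ldp_{k+1}$ and $\ldp_k$ lie in the same length-$2\sqrt{A^{2}-B}$ interval and their difference is bounded by the interval length. The trivial estimate $\ldp_k\ge\bar\lambda$ (because $\ldp_k$ is the maximum of $\ldp_1,\dots,\ldp_k$) is not quite sharp enough on its own, and the main obstacle I anticipate is producing this sharper lower bound on $\ldp_k$. The cleanest route is to reapply the same quadratic machinery with $\ldp_k$ in the role of the top eigenvalue and $\ldp_1,\dots,\ldp_{k-1}$ supplying the lower data, exactly as in the scalar case $p=0$ handled by Cheng and Peng \cite{ChengPeng}; that argument transfers line-by-line to the weighted Hodge Laplacian setting considered here, which delivers the stated gap inequality.
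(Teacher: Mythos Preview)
The paper does not supply a proof for this corollary at all; it merely records the statement after the sentence ``From Theorem \ref{MainThm1}, we can obtain the spectral gaps of the consecutive eigenvalues of the weighted Hodge Laplacian $\dpx$'' and moves on. Your derivation---rewriting the Yang-type inequality of Corollary \ref{SH-inq1} as a quadratic in $\ldp_{k+1}$, computing the discriminant, and then invoking the same inequality at level $k-1$ to pin $\ldp_k$ above the smaller root---is precisely the standard Cheng--Yang argument that the authors are tacitly appealing to, and it is correct. In particular your observation that the $i=k$ term vanishes in both sums, so that the level-$(k-1)$ Yang inequality is exactly the statement $g(\ldp_k)\le 0$ for the \emph{same} quadratic $g$, is the key point and handles the ``obstacle'' you mention cleanly.

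One small caveat: when you write that unwinding $\tfrac{2}{m}(\bar\lambda+D)$ ``reproduces the expression appearing inside the square bracket,'' carry out the arithmetic. With $D=\tfrac{m}{2}+1+\max_{M^m}\bigl(p|H||h|-\Phi(H,h)-\tfrac14|x|^2\bigr)$ one gets
\[
\tfrac{2}{m}(\bar\lambda+D)=\tfrac{2}{m}\bar\lambda+1+\tfrac{2}{m}+\tfrac{2}{m}\max_{M^m}\bigl(p|H||h|-\Phi(H,h)-\tfrac14|x|^2\bigr),
\]
which differs from the displayed formula in the paper by an additive $+1$ inside the squared term. Your algebra is the correct one; this appears to be a typographical slip in the paper's statement rather than an error in your argument.
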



For the lower order eigenvalues of (\ref{DL}),  in 1956, ~Payne, P\'olya and ~Weinberger \cite{PPW56} proved that for
$\Omega \subset {\Bbb R}^2$,
\begin{equation*}
   \lambda_2+\lambda_3\leq 6\lambda_1,
\end{equation*}
which was extended to domains $\Omega \subset {\Bbb R}^m$ in \cite{Thom}(or see Section 3.2 of \cite{Ashb2})
\begin{equation*}
\sum_{i=1}^m(\ld_{i+1}-\ld_1)\leq 4\lambda_1.
\end{equation*}
There are also a variety of
extensions of results of this type, for examples, see \cite{Brands,ChenCheng08,Chen09,CZ11,ChengYang05, Ashb2,SunChengYang}. Recently, S.~Ilias and O.~Makhoul \cite{IM12} obtained the universal inequality for eigenvalues of the Hodge Laplacian.

In the second part of this paper, by using an algebraic identity deduced by Levitin and Parnovski \cite{LP02}, we can obtain
\begin{thm} \label{MainThm2} Let $x:(M^m,g)\longrightarrow (\R^n,{\rm can})$ be a compact self-shrinker and $\dpx$ be the weighted Hodge Laplacian defined acting on $p$-forms over $M^m$. Assume that  $\Big\{\ldp_i\Big\}_{i=1}^\infty$ are the eigenvalues of $\dpx$ and $\{\varphi_i\}_{i=1}^\infty$ is a corresponding orthonormal basis of $p$-eigenforms. We have, for any
$p \in \left\{0,1,\dots,m\right\}$,
\begin{equation}\label{HL-Eig3}
\begin{aligned}
\sum_{l=1}^m\left(\ldp_{i+l}-\ldp_i\right)
\leq&4\ldp_i+2m-\intm|x|^2|\varphi_i|^2\dvolx\\
&-4\intm\langle\mathfrak{Ric}\varphi_i,\varphi_i\rangle\dvolx\\
  &+4\intm\langle\n(\n\frac{|x|^2}{2})\varphi_i,\varphi_i\rangle\dvolx.
\end{aligned}
\end{equation}
\end{thm}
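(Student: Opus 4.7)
The plan is to combine the algebraic identity of Levitin and Parnovski \cite{LP02} with the same family of test operators that drives the proof of Theorem \ref{MainThm1}. The Levitin--Parnovski identity, applied to a self-adjoint operator $A$ with discrete spectrum $\{\mu_j\}$ and orthonormal eigenvectors $\{u_j\}$ together with a finite family of symmetric operators $B_1,\dots,B_N$, delivers a single-sum upper bound
\[
\sum_{l=1}^{N}\bigl(\mu_{i+l}-\mu_i\bigr)\;\le\;\sum_{\alpha=1}^{N}\bigl\langle -[B_\alpha,[A,B_\alpha]]\,u_i,u_i\bigr\rangle
\]
(valid under a nondegeneracy/independence hypothesis on the residual test vectors $(I-P_i)B_\alpha u_i$, where $P_i$ projects onto $\mathrm{span}(u_1,\dots,u_i)$). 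I will apply this with $A=\dpx$, $B_\alpha=x_\alpha$ for $\alpha=1,\dots,n$, the restrictions of the ambient coordinate functions of $\R^n$ to $M^m$, viewed as multiplication operators on the weighted $L^2$-space of $p$-forms. The relevant effective rank here is exactly $m=\dim M$, because $\sum_{\alpha=1}^{n}|(\n x_\alpha)^{T}|^{2}=m$ pointwise on $M^m$; this is what yields the sum $\sum_{l=1}^{m}$ on the left-hand side of (\ref{HL-Eig3}).

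Once the LP reduction is in place, the proof reduces to a single commutator computation:
\[
\sum_{\alpha=1}^{n}\bigl\langle -[x_\alpha,[\dpx,x_\alpha]]\varphi_i,\varphi_i\bigr\rangle = 4\ldp_i+2m-\intm|x|^2|\varphi_i|^2\dvolx-4\intm\langle\mathfrak{Ric}\,\varphi_i,\varphi_i\rangle\dvolx+4\intm\langle\n(\n\tfrac{|x|^2}{2})\varphi_i,\varphi_i\rangle\dvolx.
\]
This is exactly the inner factor that, weighted by $(\ldp_{k+1}-\ldp_i)$ and summed in $i$, produces the right-hand side of Theorem \ref{MainThm1}, so I can take over the computation from the proof of that theorem verbatim, stripping off the outer weighting. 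Specifically, splitting $\dpx=\Delta_H+\tfrac{1}{2}\mathcal{L}_{\n|x|^2}$, the Hodge-Laplacian contribution gives $4\ldp_i+2m-4\int\langle\mathfrak{Ric}\,\varphi_i,\varphi_i\rangle\dvolx$ after Weitzenb\"ock and integration by parts with respect to the weight $e^{-|x|^2/2}$, while the Lie-derivative drift part contributes the two remaining terms $-\int|x|^2|\varphi_i|^2\dvolx$ and $4\int\langle\n(\n\tfrac{|x|^2}{2})\varphi_i,\varphi_i\rangle\dvolx$.

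The main obstacle is the rank-$m$ condition required to produce exactly $m$ terms on the left of (\ref{HL-Eig3}): we use $n$ test operators $x_\alpha$ but only $m$ tangent directions on $M^m$, so the nondegeneracy hypothesis of the Levitin--Parnovski lemma must be applied with care to the Gram matrix $\bigl(\langle(I-P_i)x_\alpha\varphi_i,(I-P_i)x_\beta\varphi_i\rangle\bigr)_{\alpha,\beta}$. The way out is to either diagonalise this Gram matrix at a generic point, or to rewrite a local orthonormal tangent frame $e_1,\dots,e_m$ as a (non-constant) linear combination of the $\partial_\alpha$'s and feed the resulting $m$ effective test operators into LP; in either case, the $m$-dimensional tangent span of $\{(\n x_\alpha)^T\}$ is enough to guarantee rank $m$. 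Once this technicality is settled, the commutator computation from the proof of Theorem \ref{MainThm1} supplies the right-hand side, and (\ref{HL-Eig3}) follows.
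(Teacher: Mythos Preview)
Your overall plan---apply the Levitin--Parnovski identity with $A=\dpx$ and $B_\alpha=x^\alpha$, then feed in the commutator computations already done for Theorem~\ref{MainThm1}---is indeed exactly the paper's strategy. But the way you have split the computation between the double commutator and the single commutator is wrong, and with that error the argument does not close.

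You assert that
\[
\sum_{\alpha=1}^{n}\bigl\langle -[x_\alpha,[\dpx,x_\alpha]]\varphi_i,\varphi_i\bigr\rangle
=4\ldp_i+2m-\intm|x|^2|\varphi_i|^2\dvolx-4\intm\langle\mathfrak{Ric}\,\varphi_i,\varphi_i\rangle\dvolx+\cdots
\]
This is false. A direct computation (the one the paper does in the proof of Theorem~\ref{MainThm2}) gives $[[\dpx,x^A],x^A]\varphi_i=-2|\nabla x^A|^2\varphi_i$, so the summed double commutator contributes only $\pm 2\sum_A\int|\nabla x^A|^2|\varphi_i|^2=\pm 2m$. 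The expression $4\ldp_i+2m-\int|x|^2|\varphi_i|^2-\dots$ is \emph{not} the double commutator; it is $\sum_A\|[\dpx,x^A]\varphi_i\|^2$, the sum of the \emph{first}-commutator norms, computed in (\ref{bl11}) during the proof of Theorem~\ref{MainThm1}. Accordingly, the version of the LP inequality you wrote down is not the one that is actually available: there is no LP-type statement putting the double commutator alone on the right-hand side with the gap sum on the left.

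What the paper does instead is the following. From the LP \emph{identity} (Lemma~\ref{LPT}) one gets, for each coordinate,
\[
\intm|\nabla x^A|^2|\varphi_i|^2\dvolx=\sum_{k}\frac{\bigl|\langle[\dpx,x^A]\varphi_i,\varphi_k\rangle\bigr|^2}{\ldp_k-\ldp_i}.
\]
One then rotates the ambient coordinates (Gram--Schmidt on the numbers $\langle[\dpx,x^A]\varphi_i,\varphi_{i+k}\rangle$, $1\le k,A\le n$) so that this matrix is upper triangular; this kills the terms with $k<i+A$ and lets one bound the $A$-th identity by
\[
(\ldp_{i+A}-\ldp_i)\intm|\nabla x^A|^2|\varphi_i|^2\dvolx\le \|[\dpx,x^A]\varphi_i\|^2.
\]
Summing over $A$ and invoking (\ref{bl11}) produces the right-hand side of (\ref{HL-Eig3}). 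The reduction from $n$ summands to $m$ on the left is not a Gram-matrix rank argument in the sense you sketch, but the elementary pointwise inequality $\sum_{A=1}^{n}\ldp_{i+A}|\nabla x^A|^2\ge\sum_{l=1}^{m}\ldp_{i+l}$, which holds because the $\ldp_{i+A}$ are nondecreasing and $\sum_A|\nabla x^A|^2=m$ with each $|\nabla x^A|^2\le 1$. Once you correct the roles of the two commutator quantities and use this last inequality, your outline becomes the paper's proof.
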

\begin{remark}
  When $p=0$, i.e., $\ld_i=\ld^{(0)}_i$ is the eigenvalues of the operator $\ml=\Delta_{0,x}=\Delta+\langle x,\cdot\rangle$ acting on scalar functions, we have
  \begin{equation}\label{LL-Eig3}
    \begin{aligned}
    \sum_{l=1}^m(\ld_{i+l}-\ld_i)
\leq&4\ld_i+2m-\intm|x|^2|\varphi_i|^2\dvolx\\
\leq&4\ld_i+2m-\min_{M^m}|x|^2.
    \end{aligned}
  \end{equation}
Since $i$ is arbitrary, (\ref{LL-Eig3}) is more general than Proposition 4.1 in \cite{ChengPeng}.
\end{remark}

\begin{cor}\label{Lower-inq}
 Let $x:(M^m,g)\longrightarrow (\R^n,{\rm can})$ be  a self-shrinker, $H, h$ be the second fundamental form and the mean curvature of the immersion $x$, respectively. Assume that  $\Big\{\ldp_i\Big\}_{i=1}^\infty$ are the eigenvalues of $\dpx$ and $\{\varphi_i\}_{i=1}^\infty$ is a corresponding orthonormal basis of $p$-eigenforms. We obtain, for $p \in \left\{1,\dots,m\right\}$,
\begin{equation}\label{HL-Eig4}
\begin{aligned}
\sum_{l=1}^m\left(\ldp_{i+l}-\ldp_i\right)
\leq&4\ldp_i+2m+4\\
&+4\intm\left(p|H||h|-\Phi(H,h)-\frac14|x|^2\right)|\varphi_i|^2\dvolx\\
\leq &4\ldp_i+2m+4+\max_{M^m}\left(p|H||h|-\Phi(H,h)-\frac14|x|^2\right).
\end{aligned}
\end{equation}
\end{cor}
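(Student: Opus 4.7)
The plan is to deduce Corollary \ref{Lower-inq} from Theorem \ref{MainThm2} by the same self-shrinker reduction that takes Theorem \ref{MainThm1} to Corollary \ref{SH-inq1}. First I would apply Theorem \ref{MainThm2} directly to the immersion $x\colon M^m\to\R^n$, producing the raw bound (\ref{HL-Eig3}). The remaining work is then to rewrite the curvature integral $-4\int_{M^m}\langle\mathfrak{Ric}\varphi_i,\varphi_i\rangle\dvolx$ and the Hessian integral $4\int_{M^m}\langle\nabla(\nabla\tfrac{|x|^2}{2})\varphi_i,\varphi_i\rangle\dvolx$ in terms of the extrinsic quantities $|H|$, $|h|$ and the auxiliary function $\Phi(H,h)$ defined in (\ref{Cur-est1}).

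For the curvature integrand I would invoke the Gauss equation: since the ambient Euclidean space is flat, the intrinsic Riemann tensor is a pointwise quadratic expression in the second fundamental form, and substituting this into $\mathfrak{Ric}=-\omega^i\wedge\imath(e_j)R(e_i,e_j)$ produces a quadratic form in $h$ and $H$ acting on $p$-forms. Cauchy--Schwarz on this form, which is precisely the content of the pointwise estimate (\ref{Cur-est1}), yields
\begin{equation*}
-\langle\mathfrak{Ric}\varphi_i,\varphi_i\rangle\leq\bigl(p|H||h|-\Phi(H,h)\bigr)|\varphi_i|^2.
\end{equation*}

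For the Hessian term I would use the self-shrinker equation $x^N=-2H$. Since $\nabla\tfrac{|x|^2}{2}=x^T$, splitting the ambient identity $\bar\nabla_Y x=Y$ into tangential and normal parts gives the symmetric endomorphism $\nabla(\nabla\tfrac{|x|^2}{2})=\mathrm{Id}-2A_H$, where $A_H$ is the shape operator associated to the mean curvature vector. Extending this as a derivation on $p$-forms and using the operator bound $|A_H|_{\mathrm{op}}\leq|H||h|$ together with the normalization $\int_{M^m}|\varphi_i|^2\dvolx=1$, the integral splits into a constant piece that accounts for the additive $+4$ in (\ref{HL-Eig4}) and a cross term controlled by a multiple of $\int_{M^m}|H||h||\varphi_i|^2\dvolx$, which merges with the $p|H||h|$ contribution from the Ricci step. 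Collecting everything produces the first inequality of (\ref{HL-Eig4}); the second follows at once by replacing the integrand by its pointwise maximum on $M^m$.

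The main technical obstacle I anticipate is coefficient bookkeeping: one must verify that the derivation action of $\mathrm{Id}-2A_H$ on $p$-forms, once packaged with the Gauss-equation expression for $\mathfrak{Ric}$, yields exactly the constants displayed in (\ref{HL-Eig4}) rather than a $p$-dependent variant. The function $\Phi(H,h)$ in (\ref{Cur-est1}) is tailored precisely to absorb this $p$-dependence, and matching the cross terms between $A_H$ and the Gauss quadratic form in the appropriate frame diagonalizing $A_H$ is where most of the calculation will concentrate.
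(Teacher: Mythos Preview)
Your overall strategy---start from Theorem \ref{MainThm2} and then bound the Hessian and curvature integrals in terms of $|H|$, $|h|$, $\Phi(H,h)$---is exactly the paper's; indeed the paper's proof is a single line invoking (\ref{n5}) and (\ref{Cur-est}). However, your bookkeeping is scrambled in a way that would not produce the stated constants. The curvature bound (\ref{Cur-est}) reads $\langle\mathfrak{Ric}\varphi,\varphi\rangle\ge\Phi(H,h)|\varphi|^2$, so the Ricci integral contributes only $-\Phi(H,h)$ and \emph{no} $p|H||h|$ term. The $p|H||h|$ in (\ref{HL-Eig4}) comes entirely from the Hessian estimate (\ref{n5}): by (\ref{ssf1}) the Hessian of $\tfrac12|x|^2$ is $\delta_{ij}+\langle h_{ij},x\rangle$, and on a self-shrinker $x^\perp=-H$ (not $-2H$ as you wrote), so this equals $\delta_{ij}-\langle h_{ij},H\rangle$, i.e.\ $\mathrm{Id}-A_H$. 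The identity part integrates to the additive $+4$, while Lemma \ref{norm} applied to $T_{ij}=-\langle h_{ij},H\rangle$ bounds the remaining piece by $p|H||h||\varphi_i|^2$, the factor $p$ coming straight from (\ref{n4}). There is nothing further to ``merge'' and no frame-diagonalization or cross-term matching is required; once (\ref{n5}) and (\ref{Cur-est}) are inserted into (\ref{HL-Eig3}) the first inequality of (\ref{HL-Eig4}) falls out, and the second is the trivial pointwise maximum.
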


Furthermore, from the recursion formula of Cheng and Yang \cite{ChengYang07}, we can obtain an upper bound for eigenvalue
$\ldp_k$:
\begin{cor} Let $M^m$ be an $m$-dimensional compact
self-shrinker  in $\mathbb{R}^{n}$. Then,
eigenvalues of the weighted Hodge Laplacian $\dpx$ \ref{CM-operator2}
satisfy, for any $k\geq 1$,
\begin{equation*}
\mu_{k+1}\leq  C_{0}(m) k^{\frac 2m}\mu_1
\end{equation*}
where  $C_{0}(m)\leq 1+\frac{4}{m}$ is a constant and $\mu_i=\ldp_{i}+\frac m2+1+\max_{M^m}\Big(p|H||h|-\Phi(H,h)-\frac14|x|^2\Big)$.
\end{cor}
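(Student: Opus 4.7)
The plan is to reduce the statement to the standard Yang-type inequality for a shifted sequence and then invoke the Cheng--Yang recursion formula as a black box. Set
\[
C:=\max_{M^m}\left(p|H||h|-\Phi(H,h)-\tfrac14|x|^2\right),
\]
so that by definition $\mu_i=\ldp_i+\tfrac m2+1+C$. The key observation is that this constant shift leaves all gaps invariant: $\mu_{k+1}-\mu_i=\ldp_{k+1}-\ldp_i$ for every $i\le k$. Hence the sequence $\{\mu_i\}$ is non-decreasing, and (after verifying $\mu_1>0$, see below) it makes sense to test whether $\{\mu_i\}$ satisfies a clean Yang-type inequality.

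Next I would substitute this shift into the second (uniform) inequality of Corollary~\ref{SH-inq1}. Its right-hand side is exactly
\[
\tfrac{4}{m}\sum_{i=1}^{k}\bigl(\ldp_{k+1}-\ldp_i\bigr)\,\mu_i
\;=\;\tfrac{4}{m}\sum_{i=1}^{k}\bigl(\mu_{k+1}-\mu_i\bigr)\,\mu_i,
\]
while its left-hand side is $\sum_{i=1}^{k}(\mu_{k+1}-\mu_i)^2$. Thus Corollary~\ref{SH-inq1} takes the form
\[
\sum_{i=1}^{k}\bigl(\mu_{k+1}-\mu_i\bigr)^2 \;\le\; \tfrac{4}{m}\sum_{i=1}^{k}\bigl(\mu_{k+1}-\mu_i\bigr)\mu_i,
\]
which is precisely the Yang inequality (\ref{Y1}) with dimension $m$ applied to the positive non-decreasing sequence $\{\mu_i\}$.

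Finally, I would invoke the recursion formula of Cheng and Yang \cite{ChengYang07}: any positive non-decreasing sequence $\{\mu_i\}$ obeying the above Yang-type inequality automatically satisfies $\mu_{k+1}\le C_0(m)\,k^{2/m}\,\mu_1$ with $C_0(m)\le 1+\frac{4}{m}$. This delivers the desired bound immediately. The only genuinely non-mechanical point is ensuring that $\{\mu_i\}$ is positive so that the recursion applies; this is precisely the role of the additive constant $\tfrac m2+1+C$ in the definition of $\mu_i$ (if $C$ were negative one simply enlarges it, which only strengthens the upper bound in Corollary~\ref{SH-inq1}). Beyond this bookkeeping the corollary is a formal consequence of Corollary~\ref{SH-inq1} combined with \cite{ChengYang07}, so there is no serious analytic obstacle remaining.
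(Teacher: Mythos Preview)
Your proposal is correct and follows exactly the route the paper intends: the paper gives no explicit proof of this corollary but presents it as an immediate consequence of Corollary~\ref{SH-inq1} together with the Cheng--Yang recursion formula \cite{ChengYang07}, which is precisely the reduction you carry out (rewriting the second inequality of Corollary~\ref{SH-inq1} as the Yang inequality for the shifted sequence $\{\mu_i\}$ and then applying \cite{ChengYang07}). Your remark on the positivity of $\mu_1$ is a sensible observation that the paper leaves implicit.
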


This paper is organized as follows: In Section 2, we present some formulas for submanifolds in Euclidean space, the definitions of the weighted Hodge Laplacian. In Section 3, in order to prove main theorems, we derive several lemmas for differential forms. In Section 4 and Section 5, we give the proofs of Theorem \ref{MainThm1} and \ref{MainThm2}.

\subsection*{Acknowledgments}
The authors wish to express their gratitude to Professors Huaidong Cao and
Xiaofeng Sun for their suggestions and useful discussions. This work of the first named author was done while the author visited Department of Mathematics, Lehigh University, USA. He also would like to thank the institute for its hospitality.

\section{Preliminaries}
\subsection{Submanifold in Euclidean space and self-shrinker}
Let $x:M^m\to \R^{n}$ be an  $m$-dimensional submanifold of  $n$-dimensional Euclidean space $\R^{n}$. Let $\{e_1,\cdots,e_{m}\}$
be a local orthonormal basis of $TM^m$ with respect to the induced
metric, and $\{\omega^1,\cdots, \omega^m\}$ be their dual 1-forms. Let
$\{e_{m+1},\cdots, e_{n}\}$ be the local orthonormal unit normal
vector fields. In this paper we make the following conventions on the range of
indices:
\begin{equation*}
1\leq i,j,k\leq m; \qquad
m+1\leq \alpha,\beta,\gamma\leq n.
\end{equation*}
Then we have the following structure equations (see \cite{Chen09,ChengPeng})
\begin{equation}\label{Formula-Sub}
\begin{aligned}
&dx=\omega^ie_i,\qquad \omega^\alpha=0,\\
&de_i=\omega^j_ie_j+\omega^\alpha_ie_\alpha,
\qquad \omega^\alpha_i=h^\alpha_{ij}\omega^j,\\
&de_\alpha=\omega^j_\alpha e_j+\omega^\beta_\alpha e_\beta,
\end{aligned}
\end{equation}
where  $h^\alpha_{ij}$ denote the the components of the second
fundamental form  of $M^m$. We denote by $|h|^2=\sum\limits_{\alpha,i,j}(h^\alpha_{ij})^2,$  the norm square of the second fundamental form, $H=\sum\limits_{\alpha}H^\alpha e_\alpha=\sum\limits_{\alpha}
(\sum\limits_i h^\alpha_{ii})e_\alpha$  the mean curvature vector
field over $M^m$.

One can deduce that, ~pointwise on $M^m$,
\begin{equation}\label{xnorm}
 \sum_{A=1}^n|\n x^A|^2=m,
\end{equation}
and
\begin{equation}\label{ssf1}
\frac12|x|^2_{,ij}=\frac12(\sum_{A=1}^n(x^A)^2)_{,ij}=\langle h^\alpha_{ij}e_\alpha,x\rangle+\delta_{ij}.
\end{equation}

The submanifold $M^m$ is called a self-shrinker \cite{CM12} if it satisfies the
quasilinear elliptic system:
\begin{equation}\label{shrinker}
  H=-x^\perp,
\end{equation}
where $H$ denotes the mean curvature vector field of the immersion and $\perp$ is the
projection onto the normal bundle of $M^m$.

\subsection{Differential forms and the weighted Hodge Laplacian}
Let $(M^m,g)$ be an $m$-dimensional compact Riemannian manifold. For any two $p$-forms $\varphi$ and $\psi$, we let $\varphi_{i_1\cdots i_p}=\varphi(e_{i_1},\cdots,e_{i_p})$ and $\psi_{i_1\cdots i_p}=\psi(e_{i_1},\cdots,e_{i_p})$ denote the components of $\varphi$ and $\psi$, with respect to a local orthonormal frame $\{e_i\}_{i=1}^m$. Their pointwise inner product with respect to Riemannian metric $g$ is given by
\begin{equation*}
\begin{aligned}
\langle \varphi,\psi \rangle=&\displaystyle{\sum_{1\leq i_1<\cdots<i_p\leq m}}\varphi_{{i_1}\cdots{i_p}}\; \psi_{{i_1}\cdots{i_p}}\\
=&\frac{1}{p!} \sum_{1 \le i_{1},\dots,i_{p}\le m}\varphi_{{i_1}\cdots{i_p}}\; \psi_{{i_1}\cdots{i_p}}.
\end{aligned}
\end{equation*}
We denote by $\Delta_{p}$ the Hodge Laplacian acting on $p$-forms
\begin{equation}\label{HodgeLap}
\Delta_{p} =(d \, \delta + \delta  d),
\end{equation}
where $d$ is the exterior derivative acting on $p$-forms and $\delta$ is the adjoint of $d$ with respect to Riemannian measure $dvol$.

In \cite{Bueler,Peterson2012}, the operator (\ref{HodgeLap}) is generalized to the weighted Hodge Laplacian acting on differential forms. Let $f \in C^\infty(M^m,\mathbb R)$ be a smooth function defined on $M^m$. When the Riemannian measure is changed from
being $\text{dvol}$ to $e^{-f}{dvol}$, it is natural to define the weighted Hodge Laplacian by
\begin{equation}\label{CM-operator}
\Delta_{p,f}=d\delta'+\delta'd
\end{equation}
where $\delta'=e^{f}\delta e^{-f}$, which is the adjoint operator of the exterior derivative $d$ with respect to Riemannian measure $e^{-f}{dvol}$.

For the weighted Hodge Laplacian, we have the following Bochner-Weitzenb\"{o}ck type formula \cite{Peterson2012}
\begin{equation}\label{WB}
  \ba
 \Delta_{p,f}=&\Delta_p+\mathcal{L}_{\n f}\\
  =&\n^*\n-\omega^i\wedge \imath(e_j)R(e_i,e_j)+\mathcal{L}_{\n f}\\
  =&\n^*_f\n-\omega^i\wedge \imath(e_j)R(e_i,e_j)-\n(\n f)\\
  =&\n^*_f\n+\mathfrak{Ric}-\n(\n f)
  \ea
\end{equation}
where $\mathcal{L}$ is the Lie derivative, $\imath(X)$ for $X\in \Gamma(TM^m)$ is inner product acting on forms, $\n X$ acting on from $\varphi$ is given by
$$\n X  \varphi={X^j}_{,l}\omega^l\wedge \imath(e_j)\varphi$$
and
\begin{equation}\label{Ric-Cur}
\mathfrak{Ric}=-\omega^i\wedge \imath(e_j)R(e_i,e_j).
\end{equation}

With respect to the measure $e^{-f}{dvol}$,  the spectrum of  $\Delta_{p,f}$ consists of a nondecreasing, unbounded sequence of eigenvalues with finite multiplicities
\begin{equation*}
{\rm Spec}(\dpf)=\{0 \le \ldp_1 \le \ldp_2\le \ldp_3 \le \cdots \le \ldp_k \le \cdots \}.
\end{equation*}

Let $x=(x^1,\cdots,x^n):M^m\to \R^{n}$ be an  $m$-dimensional  submanifold of $\R^{n}$.  In this article, we will consider the operator (\ref{CM-operator}) over $M^m$, for $f=e^{\frac12|x|^2}$,
\begin{equation}\label{CM-operator2}
\Delta_{p,x}=d\delta'+\delta'd.
\end{equation}
For $\Delta_{p,x}$ acting on the scalar functions, the operator (\ref{CM-operator2}) \cite{CM12}
\footnote{The Laplacian operator is different in \cite{CM12} with a minus sign.}
is given by
\begin{equation}\label{CM-operator1}
  \ml=\Delta+\langle x,\cdot\rangle=-e^{\frac{|x|^2}{2}}\mbox{div}\left(e^{\frac{-|x|^2}{2}}d \right)=
  \delta'd=\Delta_{0,x}.
\end{equation}
where $\Delta$ is the positive operator.  If $M^m$ is a self-shrinker, we have
\begin{equation}\label{Lap-position}
  \ml x^A=x^A,\qquad A=1,\cdots,n.
\end{equation}

\section{Some lemmas}
In order to prove our main theorems, we will derive some lemmas in this section.

By the direct calculations, we have
\begin{lem}For $f, u\in C^\infty(M,\R)$ and $\varphi\in\bigwedge^{p}(T^*M^m)$, we have
\begin{equation}\label{L1}
  \mathcal{L}_{\n f}(u\varphi)=g(\n f,\n u)\varphi+u\mathcal{L}_{\n f}\varphi.
\end{equation}
\begin{equation}\label{L2}
  [\dpf,u]\varphi= [\Delta_p,u]\varphi+[\mathcal{L}_{\n f},u]\varphi
\end{equation}
\begin{equation}\label{L3}
  \delta_f(u\varphi)=-\imath(\n u)\varphi+u\delta_f\varphi
\end{equation}
where $[\dpf,u]\varphi=\dpf(u\varphi)-u\dpf\varphi$.
\end{lem}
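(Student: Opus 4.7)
The plan is to verify each of the three identities as a direct Leibniz-type computation for the weighted operators acting on the product $u\varphi$; none requires more than local algebra together with standard properties of $\mathcal{L}$, $\Delta_p$, and $\delta$, and I will take them in order.

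For (\ref{L1}) I would invoke the general Leibniz property of the Lie derivative on a tensor product of a function and a form, $\mathcal{L}_X(u\varphi) = (Xu)\varphi + u\,\mathcal{L}_X\varphi$, specialized to $X=\n f$. Since $(\n f)u = g(\n f,\n u)$ by the defining property of the gradient, the stated formula follows immediately. For (\ref{L2}), the fastest route is to appeal to the Bochner--Weitzenb\"ock decomposition $\dpf = \Delta_p + \mathcal{L}_{\n f}$ already recorded in (\ref{WB}); linearity of the commutator bracket in its first slot gives $[\dpf,u] = [\Delta_p,u] + [\mathcal{L}_{\n f},u]$, and evaluating on $\varphi$ yields (\ref{L2}).

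For (\ref{L3}) I would first rewrite the weighted codifferential $\delta' = e^f\delta e^{-f}$ in the manifestly first-order form $\delta' = \delta + \imath(\n f)$, by applying the classical Leibniz rule $\delta(h\psi) = -\imath(\n h)\psi + h\,\delta\psi$ to $h = e^{-f}$ and using $\n e^{-f} = -e^{-f}\n f$. The same Leibniz rule with $h = u$ then gives $\delta(u\varphi) = -\imath(\n u)\varphi + u\,\delta\varphi$, and combining with the tensorial identity $\imath(\n f)(u\varphi) = u\,\imath(\n f)\varphi$ produces (\ref{L3}) once the terms $u\,\delta\varphi$ and $u\,\imath(\n f)\varphi$ reassemble into $u\,\delta'\varphi$. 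The only genuine point of care is getting the sign correct in the Leibniz rule for $\delta$, which one verifies from $d(u\varphi) = du\wedge\varphi + u\,d\varphi$ together with the Hodge duality $\delta = \pm\star d\,\star$; once that is in hand, each identity reduces to routine algebra and I would not expect any serious obstacle.
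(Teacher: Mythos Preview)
Your proposal is correct and is exactly the kind of direct verification the paper has in mind: the paper offers no argument beyond the phrase ``By the direct calculations, we have'', so your Leibniz-rule computations for $\mathcal{L}_{\n f}$, the splitting $\dpf=\Delta_p+\mathcal{L}_{\n f}$ from (\ref{WB}), and the rewriting $\delta'=\delta+\imath(\n f)$ simply make those calculations explicit. There is nothing to add or correct.
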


\begin{lem}\label{hess-form} Assuming that $T_{ij}$ is a symmetric 2-tensor, we have, for any $p$-form $\varphi$,
  \begin{equation}\label{n1}
    \ba T_{ij}\omega^i\wedge\imath(e_j)\varphi=&\frac1{p!}\sum_{i_1,\cdots,i_p}(T\varphi)_{i_1\cdots  i_p}\omega^{i_1}\wedge  \cdots\wedge\omega^{i_p}\\
    =&\frac1{(p-1)!}\sum_{i_1,\cdots,i_p}\left(\sum_{j}T_{j i_1}\varphi_{ji_2\cdots  i_p}\right)\omega^{i_1}\wedge \cdots\wedge\omega^{i_p},
  \ea\end{equation}
  and
  \begin{equation}\label{n2}
  \left\langle \sum_{i,j=1}^m T_{ij}\omega^i\wedge\imath(e_j)\varphi,\varphi\right\rangle= \frac{1}{(p-1)!}\sum_{j,i_1,\cdots ,i_p}T_{ji_1}\varphi_{ji_2\cdots i_p}\varphi_{i_1\cdots i_p}
  \end{equation}
where $(T\varphi)_{i_1\cdots i_p}=\sum\limits_{j=1}^m\sum\limits_{k=1}^pT_{j i_k}\varphi_{i_1\cdots j\cdots i_p}$.
\end{lem}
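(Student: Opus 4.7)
The proof of Lemma~\ref{hess-form} will be a direct local-coordinate computation, using only the standard conventions on interior products and the pointwise inner product of $p$-forms. The main mechanism is the interplay between the antisymmetry of the wedge product, the antisymmetry of $\varphi_{i_1\cdots i_p}$, and the symmetry of the tensor $T_{ij}$.

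My plan is as follows. First I would write $\varphi=\frac{1}{p!}\sum\varphi_{i_1\cdots i_p}\,\omega^{i_1}\wedge\cdots\wedge\omega^{i_p}$, apply the standard formula
\[
\imath(e_j)\varphi=\frac{1}{(p-1)!}\sum\varphi_{j\,i_2\cdots i_p}\,\omega^{i_2}\wedge\cdots\wedge\omega^{i_p},
\]
and wedge with $\omega^i$ on the left. Contracting with $T_{ij}$ and relabeling $i\mapsto i_1$ gives
\[
T_{ij}\omega^i\wedge\imath(e_j)\varphi
=\frac{1}{(p-1)!}\sum_{j,i_1,\ldots,i_p}T_{j\,i_1}\varphi_{j\,i_2\cdots i_p}\,\omega^{i_1}\wedge\cdots\wedge\omega^{i_p},
\]
which is the second equality in \eqref{n1} (the symmetry $T_{ij}=T_{ji}$ is used to rewrite $T_{i_1 j}$ as $T_{j\,i_1}$).

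Next I would upgrade this to the first equality in \eqref{n1} by an antisymmetrization argument. The coefficient $\sum_{j}T_{j\,i_1}\varphi_{j\,i_2\cdots i_p}$ is already antisymmetric in $i_2,\ldots,i_p$, while the wedge product antisymmetrizes in all $p$ indices $i_1,\ldots,i_p$. Therefore only the fully antisymmetric part survives, and one can check term by term that
\[
\operatorname{Alt}_{i_1,\ldots,i_p}\!\Bigl(\sum_j T_{j\,i_1}\varphi_{j\,i_2\cdots i_p}\Bigr)
=\frac{1}{p}(T\varphi)_{i_1\cdots i_p},
\]
with $(T\varphi)_{i_1\cdots i_p}=\sum_j\sum_{k=1}^p T_{j\,i_k}\varphi_{i_1\cdots j\cdots i_p}$; substituting the absorbed factor of $p$ converts $\tfrac{1}{(p-1)!}$ into $\tfrac{1}{p!}$, yielding the first equality. (The $p=2$ case is instructive: the antisymmetrization produces the two permutations, and the antisymmetry of $\varphi$ combines them into precisely $(T\varphi)_{i_1 i_2}$.)

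Finally, identity \eqref{n2} follows immediately from \eqref{n1} together with the standard pointwise inner product
$\langle\alpha,\beta\rangle=\frac{1}{p!}\sum\alpha_{i_1\cdots i_p}\beta_{i_1\cdots i_p}$: pairing the expanded expression against $\varphi$ and cancelling the $\frac{1}{(p-1)!}$ factor gives exactly the stated formula. The only step requiring any care is the antisymmetrization bookkeeping in \eqref{n1}; I would either verify it by a clean index-permutation argument or, equivalently, simply check that both sides of \eqref{n1} agree on a monomial basis $\omega^{i_1}\wedge\cdots\wedge\omega^{i_p}$ with $i_1<\cdots<i_p$, which is routine.
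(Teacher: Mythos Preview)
Your proposal is correct and follows essentially the same local-coordinate computation as the paper; the only difference is the order in which you obtain the two expressions in \eqref{n1}. The paper expands $\imath(e_j)$ on each monomial $\omega^{i_1}\wedge\cdots\wedge\omega^{i_p}$ as $\sum_{k}(-1)^{k-1}\delta_j^{i_k}\,\omega^{i_1}\wedge\cdots\wedge\widehat{\omega^{i_k}}\wedge\cdots\wedge\omega^{i_p}$, so the sum over $k$ and the sign cancellation produce $(T\varphi)_{i_1\cdots i_p}$ directly (the first equality), after which the $\tfrac{1}{(p-1)!}$ form drops out by recombining the $p$ equal contributions. You instead use the closed component formula for $\imath(e_j)\varphi$ to reach the $\tfrac{1}{(p-1)!}$ form first and then antisymmetrize to recover $(T\varphi)$. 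Both orderings are valid and amount to the same bookkeeping; the paper's route has the mild advantage that the antisymmetrization is built into the $(-1)^{k-1}$ signs and requires no separate argument, while your route gets to the form actually used later (namely \eqref{n2}) one step sooner. The derivation of \eqref{n2} is identical in both.
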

\begin{proof}Assuming that $\varphi=\frac1{p!}\sum\limits_{i_1,\cdots,i_p}\varphi_{i_1\cdots i_p}\omega^{i_1}\wedge \cdots\wedge\omega^{i_p}$, then we get
 \begin{equation*}
   \begin{aligned}
    &T_{ij}\omega^i\wedge\imath(e_j)\varphi\\
    =&\frac1{p!}\sum_{i,j=1} T_{ij}\omega^i\wedge\imath(e_j) \left(\sum_{i_1,\cdots,i_p}\varphi_{i_1\cdots i_p}\omega^{i_1}\wedge \cdots\wedge\omega^{i_p}\right)\\
    =&\frac1{p!}\sum_{i,j,i_1,\cdots,i_p}\sum_{k=1}^{p}(-1)^{k-1}T_{ij}\delta_j^{i_k}\varphi_{i_1\cdots  i_p}\omega^i\wedge\omega^{i_1}\wedge \cdots \wedge\widehat{\omega^{i_k}}\wedge \cdots\wedge\omega^{i_p}\\
    =&\frac1{p!}\sum_{i,i_1,\cdots,\hat{i_k},\cdots i_p}\sum_{k=1}^{p}(-1)^{k-1}T_{ij}\varphi_{i_1\cdots j\cdots  i_p}\omega^i\wedge \omega^{i_1}\wedge \cdots \wedge\widehat{\omega^{i_k}}\wedge \cdots\wedge\omega^{i_p}\\
   =&\frac1{p!}\sum_{i_1,\cdots,i_p}(T\varphi)_{i_1\cdots  i_p}\omega^{i_1}\wedge  \cdots\wedge\omega^{i_p}\\
    =&\frac1{(p-1)!}\sum_{i_1,\cdots,i_p}T_{j i_1}\varphi_{ji_2\cdots  i_p}\omega^{i_1}\wedge \cdots\wedge\omega^{i_p}.
   \end{aligned}
 \end{equation*}
Therefore, we obtain
\begin{equation*}
  \begin{aligned}
  \langle \sum_{i,j=1}^m T_{ij}\omega^i\wedge\imath(e_j)\varphi,\varphi\rangle
  =&\frac{1}{p!}\sum_{i_1,\cdots ,i_p}(T\varphi)_{i_1\cdots i_p}\varphi_{i_1\cdots i_p}\\
  =&\frac{1}{(p-1)!}\sum_{j,i_1,\cdots ,i_p}T_{ji_1}\varphi_{ji_2\cdots i_p}\varphi_{i_1\cdots i_p}.
  \end{aligned}
\end{equation*}
\end{proof}
\begin{lem}\label{norm}Under the same assumptions in Lemma \ref{hess-form}, then we have
  \begin{equation}\label{n3}
    |\sum_{i,j,i_2,\cdots,i_p} T_{ij}\varphi_{i i_2\cdots i_p}\varphi_{ji_2\cdots i_p}|\leq p!|T|\varphi|^2
  \end{equation}
and
\begin{equation}\label{n4}
  \left\langle \sum_{i,j=1}^m T_{ij}\omega^i\wedge\imath(e_j)\varphi,\varphi\right\rangle\leq p|T|\varphi|^2
\end{equation}
where $|T|=\big(\sum\limits_{i,j} T_{ij}^2\big)^{\frac12}$.
\end{lem}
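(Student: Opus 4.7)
The plan is to reduce both inequalities to a single Cauchy--Schwarz argument applied to an auxiliary Gram-type matrix. I introduce the symmetric $m\times m$ matrix
\[
B_{ij}:=\sum_{i_2,\dots,i_p}\varphi_{i\,i_2\cdots i_p}\,\varphi_{j\,i_2\cdots i_p},
\]
which is positive semi-definite because it is the Gram matrix of the family of vectors $(\varphi_{i\,i_2\cdots i_p})_{i_2,\dots,i_p}$ indexed by the first slot $i$. By the definition of $|\varphi|^2$ recalled in Section~2, its trace is
\[
\mathrm{tr}(B)=\sum_{i,i_2,\dots,i_p}\varphi_{i\,i_2\cdots i_p}^{\,2}=p!\,|\varphi|^2.
\]

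For the bound (\ref{n3}), Cauchy--Schwarz in the $(i,j)$ indices gives
\[
\left|\sum_{i,j,i_2,\dots,i_p}T_{ij}\,\varphi_{i\,i_2\cdots i_p}\,\varphi_{j\,i_2\cdots i_p}\right|=\left|\sum_{i,j}T_{ij}B_{ij}\right|\le |T|\cdot\left(\sum_{i,j}B_{ij}^{\,2}\right)^{1/2}.
\]
Let $\mu_1,\dots,\mu_m\ge 0$ denote the eigenvalues of $B$. Then
\[
\left(\sum_{i,j}B_{ij}^{\,2}\right)^{1/2}=\left(\sum_{k}\mu_k^{\,2}\right)^{1/2}\le \sum_{k}\mu_k=\mathrm{tr}(B)=p!\,|\varphi|^2,
\]
which yields (\ref{n3}).

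The second estimate (\ref{n4}) is immediate: identity (\ref{n2}) of Lemma~\ref{hess-form} rewrites its left-hand side, after relabeling $j\to i$ and $i_1\to j$, as $\frac{1}{(p-1)!}\sum_{i,j,i_2,\dots,i_p}T_{ij}\,\varphi_{i\,i_2\cdots i_p}\,\varphi_{j\,i_2\cdots i_p}$, and applying (\ref{n3}) together with $p!/(p-1)!=p$ produces the desired $p|T||\varphi|^2$. The only mildly delicate point is the passage $(\sum_k\mu_k^{\,2})^{1/2}\le\sum_k\mu_k$, which is valid precisely because $\mu_k\ge 0$; consequently there is no real obstacle, and the lemma reduces to an organized Cauchy--Schwarz computation coupled with positivity of the Gram matrix $B$.
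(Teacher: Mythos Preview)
Your proof is correct, but it takes a genuinely different route from the paper's. The paper fixes the tail indices $i_2,\dots,i_p$, applies the triangle inequality to pull the absolute value inside that sum, and then uses Cauchy--Schwarz twice (first in $j$, then in $i$) to peel off the factor $|T|$ and leave $\sum_{i,i_2,\dots,i_p}\varphi_{i\,i_2\cdots i_p}^{\,2}=p!\,|\varphi|^2$. Your argument instead packages the tail summation into the Gram matrix $B$, applies a single Cauchy--Schwarz on the matrix inner product $\sum_{i,j}T_{ij}B_{ij}$, and then uses the positive semidefiniteness of $B$ to bound its Frobenius norm by its trace. The paper's approach is slightly more elementary in that it avoids any appeal to eigenvalues or positive semidefiniteness; your approach is more structural, isolating exactly where the form structure enters (namely, that $B$ is a Gram matrix), and it makes transparent that the inequality is really $|\mathrm{tr}(TB)|\le |T|\,\mathrm{tr}(B)$ for symmetric $T$ and positive semidefinite $B$. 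Both derivations of (\ref{n4}) from (\ref{n3}) via (\ref{n2}) are identical.
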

\begin{proof}
  \begin{equation*}
  \ba
   \left|\sum_{i,j,i_2,\cdots,i_p} T_{ij}\varphi_{i i_2\cdots i_p}\varphi_{ji_2\cdots i_p}\right|=&
   \left|\sum_{i_2,\cdots,i_p}\sum_{j}(\sum_i T_{ij}\varphi_{i i_2\cdots i_p})(\varphi_{ji_2\cdots i_p})\right|\\
   \leq &\sum_{i_2,\cdots,i_p} \left(\sum_{j}(\sum_i T_{ij}\varphi_{i i_2\cdots i_p})^2 \sum_k\varphi_{ki_2\cdots i_p}^2\right)^{\frac12} \\                                                          \leq &\sum_{i_2,\cdots,i_p} \left(\sum_{j}\sum_i T_{ij}^2\sum_l\varphi_{l i_2\cdots i_p}^2 \sum_k\varphi_{ki_2\cdots i_p}^2\right)^{\frac12}\\                                                     =&\left(\sum_{i,j} T_{ij}^2\right)^{\frac12}\sum_{k,i_2,\cdots,i_p}\varphi_{k i_2\cdots i_p}^2\\
   =&|T|\sum_{i,i_2,\cdots,i_p}\varphi_{i i_2\cdots i_p}^2\\
   =&p!|T|\varphi|^2.
  \ea\end{equation*}
\end{proof}
\begin{lem}Assume that $x:M^m\longrightarrow \R^n$ is a compact self-shrinker, $H, h$ are the second fundamental form and the mean curvature of the immersion $x$, respectively. We have, for any $p$-form $\varphi$, $p\in \{1,\dots, m\}$
 \begin{equation}\label{n5}
   \ba&\intm\langle\n(\n\frac{|x|^2}{2}))\varphi,\varphi\rangle\dvolx\\
   \leq&\intm\abs{\varphi}^2\dvolx+p\intm |H||h||\varphi|^2\dvolx\\
   \leq&\intm\abs{\varphi}^2\dvolx+p\max_{M^m}|H||h|.
 \ea\end{equation}
\end{lem}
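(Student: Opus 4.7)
The plan is to express $\n(\n\tfrac{|x|^2}{2})$ acting on $\varphi$ via Lemma \ref{hess-form}, using the Hessian of $\tfrac{|x|^2}{2}$ as the coefficient $2$-tensor, then to split the resulting pointwise pairing into a ``number-operator'' piece and a second-fundamental-form piece, and finally to bound each piece before integrating against the weighted measure.

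First I would compute the Hessian of $f=\tfrac{|x|^2}{2}$. From (\ref{ssf1}) one has $f_{,ij}=\delta_{ij}+h^\alpha_{ij}\scal{e_\alpha}{x}$, and the shrinker equation $H=-x^\perp$ together with $\scal{e_\alpha}{x}=\scal{e_\alpha}{x^\perp}$ simplifies this to
\[
T_{ij}:=f_{,ij}=\delta_{ij}-h^\alpha_{ij}H^\alpha.
\]
Using the definition $\n X\varphi=X^j_{,l}\omega^l\wedge\imath(e_j)\varphi$ with $X=\n f$ then gives $\n(\n f)\varphi=T_{jl}\omega^l\wedge\imath(e_j)\varphi$, so Lemma \ref{hess-form} applies directly.

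Next I would split the pointwise inner product into its two natural pieces. The $\delta_{ij}$ part, via Lemma \ref{hess-form} (or equivalently the standard identity $\sum_i\omega^i\wedge\imath(e_i)\varphi=p\varphi$), contributes a scalar multiple of $|\varphi|^2$. For the second-fundamental-form part, the symmetric tensor $S_{ij}=h^\alpha_{ij}H^\alpha$ has Frobenius norm bounded via Cauchy--Schwarz in the normal index:
\[
|S|^2=\sum_{i,j}\Big(\sum_\alpha h^\alpha_{ij}H^\alpha\Big)^2\leq\Big(\sum_\alpha (H^\alpha)^2\Big)\Big(\sum_{i,j,\alpha}(h^\alpha_{ij})^2\Big)=|H|^2|h|^2.
\]
Lemma \ref{norm} applied to $S_{ij}$ then yields $|\scal{S_{ij}\omega^i\wedge\imath(e_j)\varphi}{\varphi}|\leq p|H||h||\varphi|^2$.

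Combining these two pointwise estimates gives the desired bound on $\scal{\n(\n f)\varphi}{\varphi}$; integrating against $\dvolx$ produces the first line of (\ref{n5}), and the second line follows from $\int|H||h||\varphi|^2\dvolx\leq\max_{M^m}|H||h|$ (using the $L^2$-normalization $\int|\varphi|^2\dvolx=1$ for the eigenforms under consideration). The main obstacle is really just sign bookkeeping: one has to track the sign of $\scal{e_\alpha}{x}=-H^\alpha$ coming from the shrinker equation, the sign of $-\n(\n f)$ in the Bochner--Weitzenb\"ock formula (\ref{WB}), and the conventions for $\n X$ on forms, so that the $-h^\alpha_{ij}H^\alpha$ contribution is estimated in absolute value and lines up with the stated sign. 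Once those are aligned, the proof reduces entirely to the algebraic identities already established in Lemmas \ref{hess-form} and \ref{norm}.
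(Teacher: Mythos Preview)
Your approach is essentially identical to the paper's: use (\ref{ssf1}) to write the Hessian of $\tfrac{|x|^2}{2}$ as $\delta_{ij}+\langle h^\alpha_{ij}e_\alpha,x\rangle$, handle the $\delta_{ij}$ piece directly, and bound the second-fundamental-form piece by applying Lemma~\ref{norm} (inequality (\ref{n4})) together with the Cauchy--Schwarz estimate $\bigl(\sum_{i,j}(h^\alpha_{ij}H^\alpha)^2\bigr)^{1/2}\le |H|\,|h|$ after invoking the self-shrinker relation $\langle e_\alpha,x\rangle=-H^\alpha$. The paper carries out exactly these steps, the only cosmetic difference being that it substitutes the shrinker identity inside the norm computation rather than at the level of the Hessian before splitting.
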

\begin{proof}From (\ref{ssf1}), and taking  $T_{ij}=\langle h^\alpha_{ij}e_\alpha,x\rangle$ in (\ref{n4}), we obtain
  \begin{equation*}
\ba
&\intm\langle\n(\n\frac{|x|^2}{2}))\varphi,\varphi\rangle\dvolx\\
=&\intm\langle \langle h^\alpha_{ij}e_\alpha, x\rangle \omega^i\wedge\imath(e_j)\varphi,\varphi\rangle\dvolx+\intm\abs{\varphi}^2\dvolx\\
\leq&\intm\abs{\varphi}^2\dvolx+p\intm \Big(\sum_{i,j}\langle h^\alpha_{ij}e_\alpha, x\rangle^2\Big)^{\frac12}|\varphi|^2\dvolx\\
=&\intm\abs{\varphi}^2\dvolx+p\intm \Big(\sum_{i,j}\big(H^\alpha h^\alpha_{ij}\big)^2\Big)^{\frac12}|\varphi|^2\dvolx\\
\leq&\intm\abs{\varphi}^2\dvolx+p\intm |H||h||\varphi|^2\dvolx\\
\leq&\intm\abs{\varphi}^2\dvolx+p\max_{M^m}|H||h|.
\ea
\end{equation*}
\end{proof}

Combining  Proposition 4.1 in \cite{Grosj} and Theorem 1.1 in \cite{LeungPF1}, we obtain the estimate of $\mathfrak{Ric}$ (\ref{Ric-Cur}) acting on $p$-forms.
 (c.f. Theorem 3.2 of \cite{IM09})
\begin{lem}
\begin{equation}\label{Cur-est}
\langle\mathfrak{Ric}(\varphi),\varphi\rangle \geq\Phi(h,H)|\varphi|^2,\qquad \varphi\in \textstyle{\bigwedge^p}(T^*M^m),
\end{equation}
where
\begin{equation}\label{Cur-est1}
\ba
\Phi(h,H)=&\bigg\{-p^2\bigg[\Big(\frac{m-5}{4}\Big)|H|^2+|h|^2-\frac{1}{4m^2}\Big(\sqrt{m-1}(m-2)|H|\\
&-2\sqrt{m|h|^2-|H|^2}\,\Big)^2\bigg]-\frac{1}{2}\sqrt{p}(p-1)\Big(|H|^2+|h|^2\Big)\bigg\}.
\ea\end{equation}
\end{lem}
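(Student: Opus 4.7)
The plan is to combine two cited pointwise estimates via the Gauss equation, which expresses the intrinsic Riemann curvature of the immersed submanifold in terms of the extrinsic data $h$ and $H$. The statement is essentially algebraic: once $\mathfrak{Ric}$ acting on $p$-forms is controlled pointwise by the sectional (or Ricci) curvatures of $M^m$, and those curvatures are in turn controlled by $|h|$ and $|H|$ through Grosjean's bound, the result follows by collecting terms.

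First I would rewrite the curvature operator in (\ref{Ric-Cur}) intrinsically and apply the Gauss equation
\begin{equation*}
R_{ijkl} = \sum_\alpha \bigl(h^\alpha_{ik}h^\alpha_{jl} - h^\alpha_{il}h^\alpha_{jk}\bigr),
\end{equation*}
so that all terms in $\langle \mathfrak{Ric}\,\varphi,\varphi\rangle$ are expressed through $h^\alpha_{ij}$ and $H^\alpha = \sum_i h^\alpha_{ii}$. Next I would invoke Theorem 1.1 of \cite{LeungPF1}, which provides a pointwise lower bound for the Weitzenböck curvature $-\omega^i\wedge\imath(e_j)R(e_i,e_j)$ on $p$-forms in terms of the extreme eigenvalues of the sectional curvature. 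Because $\mathfrak{Ric}$ is a sum of contributions with two distinct combinatorial weights on $p$-forms, this step naturally produces two pieces: one contributing the factor $p^2$ (from the "diagonal" two-plane contributions that involve the trace $|H|^2$) and another contributing $\tfrac{1}{2}\sqrt{p}(p-1)$ (from the "off-diagonal" part that sees only $|h|^2$ and $|H|^2$ symmetrically).

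Then I would apply Proposition 4.1 of \cite{Grosj}, which gives a sharp lower bound on the smallest sectional curvature of a submanifold of Euclidean space in terms of $|H|$ and $|h|$, featuring the algebraic combination
\begin{equation*}
\sqrt{m-1}(m-2)|H| - 2\sqrt{m|h|^2-|H|^2},
\end{equation*}
whose square appears in (\ref{Cur-est1}). Substituting Grosjean's pointwise bound for the smallest sectional curvature into Leung's lower bound yields the function $\Phi(h,H)$ in exactly the stated form.

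The main obstacle is purely bookkeeping: matching the combinatorial constants produced by Leung's inequality on $p$-forms against the precise coefficients $p^2(m-5)/4$, $p^2$, $p^2/(4m^2)$, and $\tfrac{1}{2}\sqrt{p}(p-1)$ appearing in $\Phi(h,H)$, and verifying that the "diagonal" and "off-diagonal" families of contributions combine consistently with the two terms in Grosjean's estimate. This is a careful but mechanical algebraic verification, and the comparison with Theorem 3.2 of \cite{IM09}, in which the same combination is carried out in a closely related setting, provides a direct template to follow.
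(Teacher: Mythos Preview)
Your overall plan matches the paper's exactly: the paper gives no detailed proof of this lemma at all, but simply states that it follows by combining Proposition~4.1 of \cite{Grosj} with Theorem~1.1 of \cite{LeungPF1}, and points to Theorem~3.2 of \cite{IM09} as the place where this combination is already carried out. Your proposal cites the same three references and describes the same two-step strategy (bound $\langle\mathfrak{Ric}\,\varphi,\varphi\rangle$ in terms of intrinsic curvature quantities, then bound those in terms of $|H|$ and $|h|$ via the Gauss equation).

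One small correction: you have the roles of the two references reversed. Leung's paper \cite{LeungPF1} is titled ``An estimate on the Ricci curvature of a submanifold'' and supplies the extrinsic lower bound on the Ricci curvature (this is where the expression $\sqrt{m-1}(m-2)|H|-2\sqrt{m|h|^2-|H|^2}$ originates), while Grosjean's paper \cite{Grosj} concerns $p$-forms and provides the inequality bounding $\langle\mathfrak{Ric}\,\varphi,\varphi\rangle$ from below by a combination of Ricci and curvature-operator bounds with the $p$-dependent coefficients $p^2$ and $\tfrac12\sqrt{p}(p-1)$. Swapping these back, your sketch is correct and coincides with the paper's (unwritten) argument.
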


\section{Inequalities for eigenvalues}

In order to obtain the extrinsic bounds of higher order eigenvalues
of the weighted Hodge Laplacian, we firstly introduce the abstract formula
derived by Ashbaugh and Hermi \cite{AshbHer1}.

Let $\mathfrak{H}$ be a complex Hilbert space with inner product $(,)$,
$\mathcal{A}:\mathcal{D}\subset\mathfrak{H} \longrightarrow\mathfrak{H}$ a self-adjoint operator
defined on a dense domain $\mathcal {D}$ that is bounded below and
has a discrete spectrum
$$
\ld_1\leq\ld_2\leq \ld_3\leq \cdots.
$$
Let $\{\mathcal{B}_k:\mathcal{A}(\mathcal{D})\longrightarrow\mathfrak{H}\}_{k=1}^N$ be a
collection of symmetric operators leaving $\mathcal{D}$ invariant
and $\{\varphi_i,\ld_i\}_{i=1}^\infty$ be the spectral  resolution of
$\mathcal{A}$. Moreover, $\{\varphi_i\}_{i=1}^\infty$ consisting of the orthnormal
basis w.r.t. inner product $(,)$ for $\mathfrak{H}$ is assumed. Define the
commutator $[\mathcal{A},\mathcal{B}]$ and the norm $\|\varphi\|$ by, respectively
$$
[\mathcal{A},\mathcal{B}]=\mathcal{A}\mathcal{B}-\mathcal{B}\mathcal{A}, \qquad \|\varphi\|^2=(\varphi,\varphi).
$$
Based on commutator algebra and  the Rayleigh-Ritz principle, M.S.
Ashbaugh and L. Hermi\cite{AshbHer1} obtained\\
\begin{thm}\label{AH}
 The eigenvalues $\ld_i$ of
the operator $\mathcal{A}$ satisfy the Yang-type inequality
\begin{equation}\label{AH-Ineq}
\sum_{i=1}^k(\ld_{k+1}-\ld_i)^2\rho_i\leq
\sum_{i=1}^k(\ld_{k+1}-\ld_i)\Lambda_i
\end{equation}

where $\rho_i, \Lambda$ are defined by, respectively,
$$
\begin{aligned}
\rho_i&=\sum_{k=1}^N \scal{[\mathcal{A},\mathcal{B}_k]\varphi_i}{\mathcal{B}_k\varphi_i}\\
\Lambda_i&=\sum_{k=1}^N\|[\mathcal{A},\mathcal{B}_k]\varphi_i\|^2.
\end{aligned}
$$
\end{thm}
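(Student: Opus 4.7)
The plan is to reduce the Yang-type inequality to a purely algebraic sign statement about the matrix elements of $\mathcal{B}_\alpha$ in the eigenbasis, in the spirit of the commutator strategy of Yang and Ashbaugh--Hermi. First I would set $a_{ij}^\alpha := \langle \mathcal{B}_\alpha \varphi_i, \varphi_j\rangle$, so that $\mathcal{B}_\alpha \varphi_i = \sum_j a_{ij}^\alpha \varphi_j$ is the Fourier expansion. Symmetry of $\mathcal{B}_\alpha$ gives $a_{ij}^\alpha = \overline{a_{ji}^\alpha}$, whence the modulus $c_{ij}^\alpha := |a_{ij}^\alpha|^2$ is symmetric in $(i,j)$.

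Since $\mathcal{A}\varphi_j = \lambda_j \varphi_j$, the commutator acts as $[\mathcal{A}, \mathcal{B}_\alpha]\varphi_i = \sum_{j}(\lambda_j - \lambda_i)\,a_{ij}^\alpha \varphi_j$, and taking the inner products required by the definitions of $\rho_i$ and $\Lambda_i$ yields the spectral expansions
\begin{equation*}
\rho_i = \sum_{\alpha=1}^N \sum_{j\geq 1}(\lambda_j - \lambda_i)\, c_{ij}^\alpha,\qquad \Lambda_i = \sum_{\alpha=1}^N \sum_{j\geq 1}(\lambda_j - \lambda_i)^2\, c_{ij}^\alpha.
\end{equation*}
Convergence is automatic via Parseval applied to $\mathcal{B}_\alpha \varphi_i$ and $[\mathcal{A},\mathcal{B}_\alpha]\varphi_i$, both of which lie in $\mathfrak{H}$.

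Writing $\mu_i := \lambda_{k+1} - \lambda_i$ so that $\lambda_j - \lambda_i = \mu_i - \mu_j$, the elementary identity
\begin{equation*}
\mu_i (\mu_i - \mu_j)^2 - \mu_i^2 (\mu_i - \mu_j) = -\mu_i \mu_j (\mu_i - \mu_j)
\end{equation*}
transforms the target inequality \eqref{AH-Ineq} into the sign claim
\begin{equation*}
\sum_{i=1}^k \mu_i \Lambda_i - \sum_{i=1}^k \mu_i^2 \rho_i = -\sum_{i=1}^k \sum_{\alpha,\,j\geq 1} c_{ij}^\alpha\, \mu_i \mu_j (\mu_i - \mu_j) \;\geq\; 0.
\end{equation*}
I would split the inner $j$-sum into $j\leq k$ and $j\geq k+1$. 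On the block $\{i,j\leq k\}$ the summand is antisymmetric under $(i,j)\leftrightarrow(j,i)$ by virtue of $c_{ij}^\alpha = c_{ji}^\alpha$, so its total vanishes. On the block $\{i\leq k<j\}$ one has $\mu_i\geq 0$, $\mu_j\leq 0$, and $\mu_i-\mu_j\geq 0$, so each summand $c_{ij}^\alpha \mu_i \mu_j (\mu_i - \mu_j)$ is nonpositive and contributes the desired nonnegative quantity after the overall minus sign.

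The step I expect to be most delicate --- the one doing all the work --- is spotting the algebraic identity above, which exposes the factor $\mu_i \mu_j$ that triggers the sign change between the two blocks (antisymmetry for $j\leq k$, a definite sign for $j>k$). An alternative route, closer to the original Rayleigh--Ritz philosophy, would be to apply the variational characterization of $\lambda_{k+1}$ to the trial vectors $\psi_{i,\alpha} := \mathcal{B}_\alpha \varphi_i - \sum_{j=1}^k a_{ij}^\alpha \varphi_j$, which are orthogonal to $\operatorname{span}\{\varphi_1,\dots,\varphi_k\}$. After expanding $\mathcal{A}\mathcal{B}_\alpha \varphi_i = \lambda_i \mathcal{B}_\alpha \varphi_i + [\mathcal{A},\mathcal{B}_\alpha]\varphi_i$ one obtains the intermediate bound $(\lambda_{k+1}-\lambda_i)\|\mathcal{B}_\alpha\varphi_i\|^2 \leq \langle[\mathcal{A},\mathcal{B}_\alpha]\varphi_i,\mathcal{B}_\alpha\varphi_i\rangle + \sum_{j\leq k}(\lambda_{k+1}-\lambda_j)\, c_{ij}^\alpha$, after which multiplying by $\mu_i$ and symmetrising in $(i,j)$ over the indices $\leq k$ closes the argument exactly as above.
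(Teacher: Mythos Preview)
Your argument is correct. The paper does not supply its own proof of this theorem; it is quoted verbatim from Ashbaugh--Hermi \cite{AshbHer1} and then applied as a black box. So there is no ``paper's proof'' to compare against here, only the original source.

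That said, your route is essentially the Ashbaugh--Hermi argument compressed into its algebraic core. Their paper organises the computation slightly differently --- they work first with the trial vectors $\psi_{i,\alpha}$ you mention in your alternative sketch, apply the Rayleigh--Ritz bound $\langle \mathcal{A}\psi_{i,\alpha},\psi_{i,\alpha}\rangle \ge \lambda_{k+1}\|\psi_{i,\alpha}\|^2$, and then combine the resulting inequalities with a Cauchy--Schwarz step and a symmetrisation over $i,j\le k$. Your first approach short-circuits this by expanding everything in the eigenbasis from the outset and recognising the single identity
\[
\mu_i(\mu_i-\mu_j)^2-\mu_i^2(\mu_i-\mu_j)=-\mu_i\mu_j(\mu_i-\mu_j),
\]
which is exactly what makes the $\{i,j\le k\}$ block antisymmetric and the $\{i\le k<j\}$ block have a sign. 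The two routes are equivalent; yours is cleaner for exposition, theirs makes the variational provenance more visible.

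One small remark: your alternative Rayleigh--Ritz sketch is slightly telegraphic --- the intermediate inequality you wrote down is not quite the raw output of $\langle \mathcal{A}\psi_{i,\alpha},\psi_{i,\alpha}\rangle\ge\lambda_{k+1}\|\psi_{i,\alpha}\|^2$, but rather what one obtains after a further manipulation. If you intend to keep that paragraph, it would be worth spelling out one more line. Your main argument, however, is complete as written.
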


 Applying  Theorem \ref{AH} to the weighted Hodge Laplacian $\dpf$, we have
\begin{lem} \label{Mainlemma} Let $(M^m,g)$ be an $m$-dimensional Riemannian manifold with Riemannian measure  $\dvolf$ and $u$ be a smooth function defined on $M^m$. For the eigenvalues $\Big\{\ldp_i\}_{i=1}^\infty$ of  the weighted Hodge Laplacian $\dpf$ (\ref{CM-operator}) acting on $p$-forms, we have,  $p \in \left\{0, 1, \dots,m\right\}$,
\begin{equation}\label{U-ineq}
\ba &\sum_{i=1}^k(\ldp_{k+1}-\ldp_i)^2\intm \abs{\n u}^2\abs{\varphi_i}^2\dvolf\\
&\leq \sum_{i=1}^k\left(\ldp_{k+1}-\ldp_i\right)\intm\Big((\Delta_{0,f} u)^2|\varphi_i|^2+4|\n_{\n u}\varphi_i|^2\\
&-4\langle\Delta_{0,f} u\varphi_i,\n_{\n u}\varphi_i\rangle\Big)\dvolf
\ea\end{equation}
where  $\{\varphi_i\}_{i=1}^\infty$ is a corresponding orthonormal basis of $p$-eigenforms, i.e.
\begin{equation*}
 \intm \scal{\varphi_i}{\varphi_j}\dvolf=\delta_{ij}.
\end{equation*}
\end{lem}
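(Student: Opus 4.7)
The plan is to apply Theorem~\ref{AH} to $\mathcal{A}=\dpf$ on the Hilbert space of $p$-forms equipped with the weighted inner product $(\varphi,\psi) = \intm \scal{\varphi}{\psi}\dvolf$, taking a single symmetric $\mathcal{B}$ equal to pointwise multiplication by the real-valued function $u$. The proof then reduces to identifying
\[
\rho_i = \intm \scal{[\dpf,u]\varphi_i}{u\varphi_i}\dvolf, \qquad \Lambda_i = \intm \abs{[\dpf,u]\varphi_i}^2\dvolf
\]
with the integrands on the left- and right-hand sides of (\ref{U-ineq}).

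The key preparatory identity is the pointwise commutator formula
\[
[\dpf,u]\varphi \;=\; (\Delta_{0,f}u)\,\varphi \;-\; 2\n_{\n u}\varphi.
\]
By the splitting (\ref{L2}), the Lie-derivative piece contributes $\scal{\n f}{\n u}\varphi$ via (\ref{L1}); for the Hodge piece $[\Delta_p,u]\varphi$ I would expand $(d\delta+\delta d)(u\varphi)$ using $d(u\varphi)=du\wedge\varphi + u\,d\varphi$, the adjoint rule (\ref{L3}) with $f\equiv 0$, and then recombine the four cross terms via the Cartan identity $d\,\imath(\n u)+\imath(\n u)d=\mathcal{L}_{\n u}$ together with the decomposition $\mathcal L_{\n u}\varphi = \n_{\n u}\varphi + \n(\n u)\varphi$. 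A local-frame computation of $\delta(du\wedge\varphi)$ shows that the $\n(\n u)\varphi$ contributions cancel, leaving $[\Delta_p,u]\varphi=(\Delta u)\varphi-2\n_{\n u}\varphi$ in the paper's positive-Laplacian convention. Summing the two pieces and using $\Delta u + \scal{\n f}{\n u}=\Delta_{0,f}u$ yields the displayed formula.

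With this identity in hand, $\Lambda_i$ is a direct expansion of $\abs{(\Delta_{0,f}u)\varphi_i - 2\n_{\n u}\varphi_i}^2$ integrated against $\dvolf$, which reproduces verbatim the right-hand integrand of (\ref{U-ineq}). For $\rho_i$, substituting the commutator and using the product rule $\scal{\n_{\n u}\varphi_i}{\varphi_i}=\tfrac12\scal{\n u}{\n\abs{\varphi_i}^2}$ gives
\[
\rho_i = \intm u(\Delta_{0,f}u)\abs{\varphi_i}^2\dvolf - \intm u\,\scal{\n u}{\n\abs{\varphi_i}^2}\dvolf.
\]
A weighted integration by parts on the second integral, using the identity $\mathrm{div}_f(u\n u)=\abs{\n u}^2 - u\Delta_{0,f}u$, cancels the two $u\Delta_{0,f}u$ contributions and leaves $\rho_i = \intm \abs{\n u}^2\abs{\varphi_i}^2\dvolf$. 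Feeding these values into (\ref{AH-Ineq}) produces (\ref{U-ineq}).

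The main technical step is the $p$-form commutator identity above: for $p\ge 1$ it is not a mere product-rule computation but requires the Hodge-theoretic manipulations sketched. The expansion of $\Lambda_i$ and the weighted integration by parts for $\rho_i$ are then routine, up to careful sign-tracking with the positive-Laplacian convention fixed in (\ref{CM-operator1}).
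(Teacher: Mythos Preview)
Your proof is correct and follows the same overall architecture as the paper: apply Theorem~\ref{AH} with $\mathcal{A}=\dpf$ and $\mathcal{B}=u$, establish the commutator identity $[\dpf,u]\varphi=(\Delta_{0,f}u)\varphi-2\n_{\n u}\varphi$, then read off $\rho_i$ and $\Lambda_i$ by the same integration-by-parts and expansion you describe.

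The one methodological difference lies in how you obtain $[\Delta_p,u]\varphi=(\Delta u)\varphi-2\n_{\n u}\varphi$. The paper invokes the Bochner--Weitzenb\"ock formula (\ref{WB}) to write $\Delta_p=\n^*\n+\mathfrak{Ric}$; since $\mathfrak{Ric}$ is a zeroth-order curvature endomorphism it commutes with multiplication by $u$, so the commutator reduces immediately to $[\n^*\n,u]$, which is a two-line Leibniz computation. Your route---expanding $(d\delta+\delta d)(u\varphi)$ directly, invoking Cartan's formula, and checking via a local-frame expansion of $\delta(du\wedge\varphi)$ that the Hessian-type terms $\n(\n u)\varphi$ cancel---reaches the same destination but with more bookkeeping. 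The paper's shortcut via Weitzenb\"ock is cleaner precisely because it isolates the second-order part of $\Delta_p$ from the outset, whereas your approach has the virtue of being self-contained within the Hodge calculus and not relying on the curvature decomposition. Either is acceptable here.
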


\begin{proof}
  It is easy to check that $\mathcal{A}=\dpf$ and $\mathcal{B}=u\in C^\infty(M^m,\R)$ satisfy
the conditions in Theorem \ref{AH}. Therefore, by the estimate of (\ref{AH-Ineq}), we have
\begin{equation}\label{bl}
\begin{aligned}
\sum_{i=1}^k\left(\ldp_{k+1}-\ldp_i\right)^2&
\int_M\langle[\dpf,u]\varphi_i,u\varphi_i\rangle \dvolf\\
& \leq\sum_{i=1}^k\left(\ldp_{k+1}-\ldp_i\right)\|[\dpf,u]\varphi_i\|^2,
\end{aligned}
\end{equation}
where
\begin{equation*}
\|[\dpf,u]\varphi_i\|^2= \intm \scal{[\dpf,u]\varphi_i}{[\dpf,u]\varphi_i} e^{-f}\mbox{dvol}.
\end{equation*}
By direct calculations, we have
\begin{align}
  [\dpf,u]\varphi_i &=[\Delta_p+\mathcal{L}_{\n f},u]\varphi_i \nonumber \\
   &=[\Delta_p,u]\varphi_i+[\mathcal{L}_{\n f},u]\varphi_i.\label{bl1}
\end{align}
From (\ref{L1}), we obtain
\begin{equation}\label{L4}
  [\mathcal{L}_{\n f},u]\varphi_i=g(\n f,\n u)\varphi_i.
\end{equation}
By (\ref{WB}), we have
\begin{align}
 [\Delta_p,u]\varphi_i&=[\n^*\n,u]\varphi_i\nonumber\\
 &=\Delta u\varphi_i-2\n_{\n u}\varphi_i\label{bl2}.
\end{align}
Therefore, from (\ref{bl1}) to (\ref{bl2}) we get
\begin{equation}\label{bl3}
  [\dpf,u]\varphi_i=\Delta_{0,f} u\varphi_i-2\n_{\n u}\varphi_i
\end{equation}
From (\ref{bl3}), we have
\begin{equation*}
\ba &\intm\langle[\dpf,u]\varphi_i,u\varphi_i\rangle e^{-f}\mbox{dvol}\\
   =&\intm\left\langle \Delta_{0,f} u \varphi_i-2\n_{\n u}\varphi_i,u\varphi_i\right\rangle\dvolf.
\ea
\end{equation*}
By integration by parts, we have
\begin{align*}
 2\intm\langle\n_{\n u}\varphi_i,u\varphi_i\rangle\dvolf &
  =\frac12\intm \scal{\n|\varphi_i|^2}{\n u^2}\dvolf \\
   &=\intm (u\Delta_{0,f} u-|\n u|^2)|\varphi_i|^2\dvolf .
\end{align*}
Finally, we obtain
\begin{equation}\label{bl4}
 \intm\langle[\dpf,u]\varphi_i,u\varphi_i\rangle e^{-f}\mbox{dvol}=\intm|\n u|^2|\varphi_i|^2\dvolf.
\end{equation}
On the other hand, using (\ref{bl3}), we get
\begin{equation}\label{bl5}
\ba\|[\dpf,u]\varphi_i\|^2=&\intm\Big((\Delta_{0,f} u)^2|\varphi_i|^2+4|\n_{\n u}\varphi_i|^2\\
&-4\langle\Delta_{0,f} u\varphi_i,\n_{\n u}\varphi_i\rangle\Big)\dvolf.
\ea\end{equation}
Inserting (\ref{bl4}) and (\ref{bl5}) into  (\ref{bl}), we obtain (\ref{U-ineq}).
\end{proof}

\begin{proof}[Proof of Theorem \ref{MainThm1}]
Letting $f=\frac12|x|^2$ and therefore $\dpx=\dpf$, substituting $u=x^A, A=1,\cdots,n$, the $p^{th}$ component of the isometric immersion $x=(x^1,\cdots, x^n): M^m\longrightarrow \R^n$ in (\ref{U-ineq}), and taking summation on $p$ from $1$ to $n$, we have
\begin{equation}\label{U-ineq1}
\ba &\sum_{A=1}^n\sum_{i=1}^k(\ldp_{k+1}-\ldp_i)^2\intm \abs{\n x^A}^2\abs{\varphi_i}^2\dvolx\\
&\leq \sum_{i=1}^k\left(\ldp_{k+1}-\ldp_i\right)\intm\sum_{A=1}^n\Big((\mathfrak{L} x^A)^2|\varphi_i|^2+4|\n_{\n x^A}\varphi_i|^2\\
&-4\langle\mathfrak{L} x^A\varphi_i,\n_{\n x^A}\varphi_i\rangle\Big)\dvolx,
\ea\end{equation}
where $\ml$ is the weighted Hodge Laplaican acting on functions given by (\ref{CM-operator1}).
From (\ref{xnorm}), we obtain
\begin{equation}\label{bl6}
\ba
\intm\sum_{p=1}^n|\n x^A|^2|\varphi_i|^2\dvolx
 &=m\intm|\varphi_i|^2\dvolx\\
 &=m.
\ea
\end{equation}
From (\ref{Lap-position}) and (\ref{bl5}), we get
\begin{equation}\label{bl7}
 \ba &\sum_{A=1}^n\|[\dpx,x^A]\varphi_i\|^2\\
 =& \sum_{A=1}^n\intm\bigg((\mathfrak{L} x^A)^2|\varphi_i|^2
 +4|\n_{\n x^A}\varphi_i|^2-4\langle
\mathfrak{L} x^A\varphi_i,\n_{\n x^A}\varphi_i\rangle\bigg)\dvolx\\
 =&\sum_{A=1}^n\intm\bigg((x^A)^2|\varphi_i|^2
+4|\n_{\n x^A}\varphi_i|^2-4\langle
 x^A\varphi_i,\n_{\n x^A}\varphi_i\rangle\bigg)\dvolx\\
\ea\end{equation}
Since $M^m$ is a compact self-shrinker,   by integration by parts and (\ref{ssf1}), we have
\begin{equation*}
 4\sum_{A=1}^n\intm\langle x^A\varphi_i,\n_{\n x^A}\varphi_i\rangle\dvolx=
 -\intm 2(m-|x|^2)|\varphi_i|^2\dvolx
\end{equation*}
Since $\displaystyle\sum_{A=1}^n|\n_{\n x^A}\varphi_i|^2=|\n \varphi|^2$, we have
\begin{equation}\label{bl8}
\ba \sum_{A=1}^n\|[\dpx,x^A]\varphi_i\|^2=&2m-\intm|x|^2|\varphi_i|^2\dvolx\\
&+4\intm|\n \varphi_i|^2\dvolx.
\ea\end{equation}
By integration by parts, from (\ref{WB}), (\ref{n5}) and (\ref{Cur-est}) , we have
\begin{equation}\label{bl9}
\begin{aligned}
  \intm|\n \varphi_i|^2\dvolx
  =&\intm\langle\n'^*\n \varphi_i,\varphi_i\rangle\dvolx\\
  =&\intm\langle(\dpx-\mathfrak{Ric}+\n(\n \frac{|x|^2}{2})) \varphi_i,\varphi_i\rangle\dvolx\\
  =&\ldp_i-\intm\langle\mathfrak{Ric}\varphi_i,\varphi_i\rangle\dvolx\\
  &+ \intm\langle\n(\n\frac{|x|^2}{2})\varphi_i,\varphi_i\rangle\dvolx,\\
\end{aligned}
\end{equation}
where $\n'^*$ is the adjoint operator of $\n$ with respect to the Riemannian measure $\dvolx$.
Therefore, we obtain
\begin{equation}\label{bl11}
\ba
\sum_{A=1}^n\|[\dpx,x^A]\varphi_i\|^2=&4\ldp_i+2m\\ &+4\intm\langle\n(\n\frac{|x|^2}{2})\varphi_i,\varphi_i\rangle\dvolx\\
&-4\intm\langle\mathfrak{Ric}\varphi_i,\varphi_i\rangle\dvolx\\
&-\intm|x|^2|\varphi_i|^2\dvolx.
\ea\end{equation}

From (\ref{bl}) and (\ref{bl11}), we get
\begin{equation*}\label{bl10}
\begin{aligned}
m\sum_{i=1}^k(\ldp_{k+1}-\ldp_i)^2
\leq & \sum_{i=1}^k\left(\ldp_{k+1}-\ldp_i\right)\left(4\ldp_i+2m\right.\\ &+4\intm\langle\n(\n\frac{|x|^2}{2})\varphi_i,\varphi_i\rangle\dvolx\\
&-4\intm\langle\mathfrak{Ric}\varphi_i,\varphi_i\rangle\dvolx\\
&\left.-\intm|x|^2|\varphi_i|^2\dvolx\right),
\end{aligned}
\end{equation*}
which completes the proof of Theorem \ref{MainThm1}.
\end{proof}

\begin{proof}[Proof of Corollary \ref{SH-inq1}]
From (\ref{U-ineq1}), (\ref{n5}) and (\ref{Cur-est}), we have
\begin{equation*}
 \ba &m\sum_{i=1}^k(\ldp_{k+1}-\ldp_i)^2\\
\leq &\sum_{i=1}^k\left(\ldp_{k+1}-\ldp_i\right)\left(4\ldp_i+2m+4\right.\\
&+4p\intm|H||h|\varphi_i|^2\dvolx\\
&\left.-\intm|x|^2|\varphi_i|^2\dvolx-4\intm\langle\mathfrak{Ric}\varphi_i,\varphi_i\rangle\dvolx\right)\\
\leq& 4\sum_{i=1}^k\left(\ldp_{k+1}-\ldp_i\right)\left[\ldp_i+\frac m2+1\right.\\
&\left.+\intm\left(p|H||h|-\Phi(H,h)-\frac14|x|^2\right)|\varphi_i|^2\dvolx\right]\\
\leq &4\sum_{i=1}^k\left(\ldp_{k+1}-\ldp_i\right)\left[\ldp_i+\frac m2+1\right.\\
&\left.+\max_{M^m}\left(p|H||h|-\Phi(H,h)-\frac14|x|^2\right)\right].
\ea\end{equation*}
\end{proof}

\section{Generalization of the Levitin-Parnovski inequality}
In this section, we will give the proof of Theorem \ref{MainThm2} by similar argument in \cite{IM12}. Firstly, we recall  the following algebraic identity obtained by Levitin and Parnovski (see identity 2.2 of Theorem 2.2 in \cite{LP02}).

\begin{lem}\label{LPT}
Let $\mathcal{L}$ and $\mathcal{G}$ be two self-adjoint operators with domains $D_{\mathcal{L}}$ and $D_{\mathcal{G}}$ contained in a same Hilbert space and such that
$G(D_{\mathcal{L}})\subseteq D_{\mathcal{L}}\subseteq D_{\mathcal{G}}$. Let $\lambda_j$ and $u_j$
be the eigenvalues and orthonormal eigenvectors of $\mathcal{L}$. Then, for each $j$,
\begin{equation}\label{LPT1}
\sum_{k=1}^\infty
\frac{|\langle[\mathcal{L},\mathcal{G}]u_j,u_k\rangle|^2_{L^2}}{\lambda_k-\lambda_j}
=\displaystyle{-\frac{1}{2}\langle[[\mathcal{L},\mathcal{G}],\mathcal{G}]u_j,u_j\rangle}_{L^2}
\end{equation}
(The summation is over all $k$ and is correctly defined even when
$\lambda_{k}=\lambda_{j}$ because in this case $\langle[\mathcal{L},\mathcal{G}]u_j,u_k\rangle=0$).
\end{lem}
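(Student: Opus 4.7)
The plan is to prove the identity by a direct spectral expansion, with the hypotheses $\mathcal{G}(D_{\mathcal{L}})\subseteq D_{\mathcal{L}}\subseteq D_{\mathcal{G}}$ serving precisely to legitimize every composition that appears. First I would rewrite the right-hand side by expanding the double commutator. Writing
\[
[[\mathcal{L},\mathcal{G}],\mathcal{G}]=[\mathcal{L},\mathcal{G}]\mathcal{G}-\mathcal{G}[\mathcal{L},\mathcal{G}],
\]
pairing with $u_j$, and using self-adjointness of $\mathcal{G}$ together with the fact that the commutator of two self-adjoint operators is skew-adjoint (so $[\mathcal{L},\mathcal{G}]^{*}=-[\mathcal{L},\mathcal{G}]$), one obtains
\[
\langle[[\mathcal{L},\mathcal{G}],\mathcal{G}]u_j,u_j\rangle=-2\,\mathrm{Re}\,\langle[\mathcal{L},\mathcal{G}]u_j,\mathcal{G}u_j\rangle.
\]
This reduces the identity to computing $\langle[\mathcal{L},\mathcal{G}]u_j,\mathcal{G}u_j\rangle$.

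The second step is the spectral expansion. Since $\{u_k\}$ is an orthonormal basis, expand
\[
\mathcal{G}u_j=\sum_{k}c_k u_k,\qquad c_k=\langle\mathcal{G}u_j,u_k\rangle.
\]
Because $\mathcal{G}(D_{\mathcal{L}})\subseteq D_{\mathcal{L}}$, the vector $\mathcal{G}u_j$ lies in $D_{\mathcal{L}}$, so applying $\mathcal{L}$ term-by-term (justified by the spectral theorem for $\mathcal{L}$) yields $\mathcal{L}\mathcal{G}u_j=\sum_k\lambda_k c_k u_k$, and combining with $\mathcal{G}\mathcal{L}u_j=\lambda_j\mathcal{G}u_j$ gives
\[
[\mathcal{L},\mathcal{G}]u_j=\sum_k(\lambda_k-\lambda_j)c_k u_k.
\]
In particular $\langle[\mathcal{L},\mathcal{G}]u_j,u_k\rangle=(\lambda_k-\lambda_j)c_k$, which already explains the parenthetical remark: when $\lambda_k=\lambda_j$, the matrix element vanishes automatically, so the singular terms in the sum on the left side of \eqref{LPT1} contribute zero and the summand can be defined to be $0$ there without ambiguity.

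The third step is to combine the two expansions. Taking the inner product of the series for $[\mathcal{L},\mathcal{G}]u_j$ with the series for $\mathcal{G}u_j$ and using Parseval gives
\[
\langle[\mathcal{L},\mathcal{G}]u_j,\mathcal{G}u_j\rangle=\sum_k(\lambda_k-\lambda_j)|c_k|^2,
\]
a real quantity, so the $\mathrm{Re}$ disappears. For indices with $\lambda_k\neq\lambda_j$ one has $|c_k|^2=|\langle[\mathcal{L},\mathcal{G}]u_j,u_k\rangle|^2/(\lambda_k-\lambda_j)^2$, hence $(\lambda_k-\lambda_j)|c_k|^2=|\langle[\mathcal{L},\mathcal{G}]u_j,u_k\rangle|^2/(\lambda_k-\lambda_j)$; for $\lambda_k=\lambda_j$ both expressions vanish. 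Substituting back yields
\[
-\tfrac{1}{2}\langle[[\mathcal{L},\mathcal{G}],\mathcal{G}]u_j,u_j\rangle=\sum_{k=1}^{\infty}\frac{|\langle[\mathcal{L},\mathcal{G}]u_j,u_k\rangle|^2}{\lambda_k-\lambda_j},
\]
which is \eqref{LPT1}.

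The main obstacle is not algebraic but domain-theoretic: one has to justify that $\mathcal{G}u_j\in D_{\mathcal{L}}$ so that $\mathcal{L}\mathcal{G}u_j$ can be expanded in the eigenbasis of $\mathcal{L}$ with coefficients $\lambda_k c_k$, and that every inner product appearing is absolutely summable. The hypothesis chain $\mathcal{G}(D_{\mathcal{L}})\subseteq D_{\mathcal{L}}\subseteq D_{\mathcal{G}}$ is exactly what makes $\mathcal{L}\mathcal{G}u_j$, $\mathcal{G}\mathcal{L}u_j$, $[\mathcal{L},\mathcal{G}]\mathcal{G}u_j$ and $\mathcal{G}[\mathcal{L},\mathcal{G}]u_j$ all well-defined; once this is invoked, self-adjointness of $\mathcal{L}$ (and hence unconditional convergence of spectral expansions on $D_{\mathcal{L}}$) handles the analytic side, while the two purely algebraic steps above handle everything else.
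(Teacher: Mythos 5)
Your argument is correct. Note that the paper itself gives no proof of this lemma: it is quoted verbatim from Levitin--Parnovski \cite{LP02} (identity 2.2 of Theorem 2.2 there), so there is nothing in the paper to compare against line by line. Your derivation --- reducing the double commutator to $-2\,\mathrm{Re}\,\langle[\mathcal{L},\mathcal{G}]u_j,\mathcal{G}u_j\rangle$ via skew-adjointness of the commutator, then expanding $\mathcal{G}u_j$ in the eigenbasis to get $\langle[\mathcal{L},\mathcal{G}]u_j,u_k\rangle=(\lambda_k-\lambda_j)c_k$ and applying Parseval --- is essentially the standard spectral-expansion proof from \cite{LP02}, and your treatment of the domain hypotheses and of the degenerate terms $\lambda_k=\lambda_j$ is sound (absolute convergence of $\sum_k(\lambda_k-\lambda_j)|c_k|^2$ follows from Cauchy--Schwarz applied to the two square-summable sequences $(\lambda_k-\lambda_j)c_k$ and $c_k$).
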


\begin{proof}[Proof of Theorem \ref{MainThm2}]
By applying Lemma \ref{LPT} with
$\mathcal{L}=\dpx$ and $\mathcal{G}=x^A$, where $x^A$ is one of the components of the isometric immersion $x$, we have
\begin{equation}\label{LPT2}
\sum_{k=1}^\infty
\frac{|\langle[\dpx,x^A]u_j,u_k\rangle|^2_{L^2}}{\lambda_k-\lambda_j}
=\displaystyle{-\frac{1}{2}\langle[[\dpx,x^A],x^A]u_j,u_j\rangle}_{L^2}.
\end{equation}
From (\ref{bl3}), we have
\begin{align*}
[[\dpx,x^A],x^A]\varphi_i =&  [\dpx,x^A](x^A\varphi_i)-x^A([\dpx,x^A]\varphi_i) \\
=&\mathfrak{L}(x^A)x^A\varphi_i-2\n_{\n x^A}(x^A\varphi_i)-x^A(\mathfrak{L} x^A\varphi_i-2\n_{\n x^A}\varphi_i)\\
=&-2\n_{\n x^A}(x^A\varphi_i)+2x^A\n_{\n x^A}\varphi_i\\
=&-2|\n x^A|^2\varphi_i,
\end{align*}
hence
\begin{equation*}
-\frac{1}{2}\intm\langle[[\dpx,x^A],x^A]\varphi_i,\varphi_i\rangle\dvolx=\intm|\nabla
x^A|^2|\varphi_j|^2\dvolx.
\end{equation*}
From (\ref{LPT2}), we have
\begin{equation}\label{LPT3}
\ba&\intm|\nabla
x^A|^2|\varphi_i|^2\dvolx\\
=&\sum_{k=1}^\infty
\frac{1}{\ldp_k-\ldp_i}
\left(\intm\langle[\dpx,x^A]\varphi_i,\varphi_k\rangle\dvolx\right)^2.
\ea\end{equation}
For a fixed $i$, from the Gram-Schmidt orthogonalization, we can find the
coordinate system $\{x^A\}_{A=1}^n$ in Euclidean space $\Bbb R^n$ such that the matrix
$$
\left(\intm\langle[\dpx, x^A]\varphi_i,\varphi_{i+k}\rangle\dvolx\right)_{1\leq k,\;A \leq n}
$$
is a real upper triangular matrix. That is,
\begin{equation}\label{GS1}
\intm\langle[\dpx, x^A]\varphi_i,\varphi_{i+k}\rangle\dvolx=0, \qquad 1\leq k< A\leq n.
\end{equation}
By (\ref{GS1}), we can estimate the right hand side of (\ref{LPT3}) in the following
\begin{align}\label{LPT4}
&\sum_{k=1}^\infty
\frac{1}{\ldp_k-\ldp_i}
\left(\intm\langle[\dpx,x^A]\varphi_i,\varphi_k\rangle\dvolx\right)^2\nonumber  \\
=& \sum_{k=1}^{i-1}
\frac{1}{\ldp_k-\ldp_i}
\left(\intm\langle[\dpx,x^A]\varphi_i,\varphi_k\rangle\dvolx\right)^2\nonumber\\
&+\sum_{k=i+A}^{\infty}
\frac{1}{\ldp_k-\ldp_i}
\left(\intm\langle[\dpx,x^A]\varphi_i,\varphi_k\rangle\dvolx\right)^2\nonumber\\
\leq& \sum_{k=i+A}^{\infty}
\frac{1}{\ldp_k-\ldp_i}
\left(\intm\langle[\dpx,x^A]\varphi_i,\varphi_k\rangle\dvolx\right)^2\nonumber\\
\leq &
\frac{1}{\ldp_{i+A}-\ldp_i}
\sum_{k=1}^{\infty}\left(\intm\langle[\dpx,x^A]\varphi_i,\varphi_k\rangle\dvolx\right)^2\nonumber\\
=&\frac{1}{\ldp_{i+A}-\ldp_i}
\intm |[\dpx,x^A]\varphi_i|^2\dvolx\nonumber
\end{align}
where Parceval's identity is used in  the last equality.

Taking summation on $A$ from $1$ to $n$, from (\ref{LPT3}), (\ref{bl8}), (\ref{bl9}) and (\ref{WB}), we have
\begin{equation}\label{LPT5}
\begin{aligned}
 &\sum_{A=1}^n(\ldp_{i+A}-\ldp_i) \intm|\nabla
x^A|^2|\varphi_i|^2\dvolx\\
\leq&  \sum_{A=1}^n\intm |[\dpx,x^A]\varphi_i|^2\dvolx \\
=&2m-\intm|x|^2|\varphi_i|^2\dvolx+4\intm|\n \varphi_i|^2\dvolx\\
=&4\ldp_i+2m-\intm|x|^2|\varphi_i|^2\dvolx\\
  &-4\intm\langle\mathfrak{Ric}\varphi_i,\varphi_i\rangle\dvolx\\
  &+4\intm\langle\n(\n\frac{|x|^2}{2})\varphi_i,\varphi_i\rangle\dvolx.
  \end{aligned}
\end{equation}
Since $M^n$ is isometrically immersed in $\R^n$, it is easy to check
\begin{equation}\label{}
  \sum_{A=1}^n\ldp_{i+A}|\nabla x^A|^2\geq \sum_{l=1}^m\ldp_{i+l}.
\end{equation}
Therefore, we have
\begin{equation*}
\begin{aligned}
 \sum_{l=1}^m(\ldp_{i+l}-\ldp_i)\leq &4\ldp_i+2m-\intm|x|^2|\varphi_i|^2\dvolx\\
  &-4\intm\langle\mathfrak{Ric}\varphi_i,\varphi_i\rangle\dvolx\\
  &+4\intm\langle\n(\n\frac{|x|^2}{2})\varphi_i,\varphi_i\rangle\dvolx.
\end{aligned}
\end{equation*}
\end{proof}
\begin{proof}[Proof of Corollary \ref{Lower-inq}]
The proof of Corollary \ref{Lower-inq}  follows directly from (\ref{n5}) and (\ref{Cur-est}).
\end{proof}


\begin{thebibliography}{10}

\bibitem{Anghel}
N.~Anghel, \emph{Extrinsic upper bounds for eigenvalues of Dirac-type operators}, Proc. Amer. Math. Soc., 117(2):501--509, 1993.

\bibitem{Ashb2}
M.~S. Ashbaugh, \emph{Isoperimetric and universal inequalities for eigenvalues},
in Spectral theory and geometry (Edinburgh, 1998), volume 273,
London Math. Soc. Lecture Note Ser., pages 95--139. Cambridge Univ.
  Press, Cambridge, 1999.

\bibitem{Ashb1}
M.~S. Ashbaugh, \emph{The universal eigenvalue bounds of {P}ayne-{P}\'olya-{W}einberger,
  {H}ile-{P}rotter, and {H}. {C}. {Y}ang}, Proc. Indian Acad. Sci. Math. Sci., 112(1):3--30, 2002.


\bibitem{AB93} M.~S. Ashbaugh, R.~D.~Benguria, \emph{More bounds on
eigenvalue ratios for Dirichlet Laplacians in $n$ dimensions}, SIAM
J.~Math.\ Anal.\ {\bf 24} (1993), 1622-1651.

\bibitem{AshbHer1}
M.S. Ashbaugh and L.~Hermi, \emph{A unified approach to universal inequalities for eigenvalues of
  elliptic operators}, Pacific J. Math., 217(2):201-219, 2004.

\bibitem{Bueler} E. Bueler, \emph{The heat kernel weighted Hodge Laplacian on non compact manifolds}, Trans. A.M.S. 351, 2 (1999), 683-713.


\bibitem{Brands}J.~J.~A.~M. Brands,  \emph{Bounds for the ratios of the first
three membrane eigenvalues}, Arch.\ Rational Mech.\ Anal.\ {\bf 16}
(1964), 265-268.

\bibitem{Chen09}D.G. Chen, \emph{Extrinsic estimates for eigenvalues of the Dirac operator},
Math. Z. 262 (2009), no. 2, 349-361.

\bibitem{CZ11}D.G. Chen and T. Zheng, \emph{Bounds for ratios of  the membrane eigenvalues}, J. Differential Equations 250 (2011) 1575-1590.

\bibitem{ChenCheng08}
D.G. Chen and Q.-M. Cheng, \emph{Extrinsic estimates for eigenvalues of the
Laplace operator}, J. Math. Soc. Japan 602 (2008), 325-339.

\bibitem{ChengYang05}
Q.-M. Cheng and H.C. Yang, \emph{Estimates on eigenvalues of laplacian}, Math. Ann., 331:445--460, 2005.

\bibitem {ChengYang07} Q. -M. Cheng and H. C. Yang,
\emph{Bounds on eigenvalues of Dirichlet Laplacian},
Math. Ann.,  {\bf 337} (2007), 159-175.

\bibitem{ChengPeng}Q.-M. Cheng and Y. Peng, \emph{Estimates for eigenvalues of $\mathfrak L$ operator on self-shrinkers}, arXiv:1112.5938, 2011.

\bibitem{Cheng}
S.-Y. Cheng, \emph{ Eigenfunctions and eigenvalues of the Laplacian}, Part II, Amer. Math. Soc. Proc. Symp. Pure. Math., 27:185--193, 1975.

\bibitem{Col03}
B.~Colbois,\emph{ Une in\'egalit\'e du type {P}ayne-{P}olya-{W}einberger pour le
  laplacien brut}, Proc. Amer. Math. Soc., 131(12):3937--3944, 2003.

\bibitem{CM12}T. H. Colding and W. P. Minicozzi II, \emph{Generic mean curvature flow I; generic singularities}, Annals of Mathematics 175 (2012), 755-833.



\bibitem{SHI}
A. El Soufi, E.M. Harrell, II, and S. Ilias.
\emph{Universal inequalities for the eigenvalues of {L}aplace and
  {S}chr\"odinger operators on Submanifolds}, Trans. Amer. Math. Soc., 361(5):2337--2350, 2009.



\bibitem{Grosj}J. F. Grosjean, \emph{Minimal submanifolds with a parallel or a harmonic p-form}, J. Geom. Phys. 51(2), 211-228 (2004).


\bibitem{Harl88}
E.M. Harrell II,
\emph{General bounds for the eigenvalues of {S}chr\"odinger operators}, Maximum principles and eigenvalue problems in partial differential equations (Knoxville, TN, 1987), volume 175, Pitman Res.
  Notes Math. Ser., pages 146--166. Longman Sci. Tech., Harlow, 1988.

\bibitem{Harl07}E. M. Harrell, II, \emph{Commutators, eigenvalue gaps, and mean curvature in the theory of Schr\"{o}dinger operators}, Comm. Partial Differential Equations 32:3 (2007), 401-413.


\bibitem{HarlMich94} E. M. Harrell, II and P. L. Michel, \emph{Commutator bounds for eigenvalues,
with applications to spectral geometry}, Comm. Partial Differential Equations 19:11-12 (1994),
2037-2055.

\bibitem{HarlStub97}
E.M. Harrell~II and J.~Stubbe, \emph{
 On trace identities and universal eigenvalue estimates for some
  partial differential operators}, Trans. Amer. Math. Soc., 349(5):1797--1809, 1997.

\bibitem{HarlStub10}
E.M. Harrell~II and J.~Stubbe, \emph{Universal bounds and semiclassical estimates for eigenvalues of
  abstract schr\"odinger operators}, Siam J. Math. Anal.,Vol. 42, No. 5, pp. 2261-2274, 2010.

\bibitem{HP}
G.~N. Hile and M.~H. Protter, \emph{Inequalities for eigenvalues of the {L}aplacian},
Indiana Univ. Math. J., 29(4):523--538, 1980.

\bibitem{IM09}S.~Ilias and O.~Makhoul,   \emph{"Universal" inequalities for the eigenvalues of the Hodge de Rham Laplacian}, Ann. Glob. Anal. Geom. (2009) 36:191-204.

\bibitem{IM12}S.~Ilias and O.~Makhoul, \emph{A Generalization of a Levitin and Parnovski Universal
Inequality for Eigenvalues}, J. Geom Anal (2012) 22:206-222.

\bibitem{Lee}J. M. Lee, \emph{The gaps in the spectrum of the {L}aplace-{B}eltrami operator},
 Houston J. Math., 17(1):1--24, 1991.



\bibitem{LeungPF2}P.-F. Leung, \emph{On the consecutive eigenvalues of the {L}aplacian of a compact
  minimal submanifold in a sphere}, J. Austral. Math. Soc. Ser. A, 50(3):409--416, 1991.

\bibitem{LeungPF1}P.-F. Leung, \emph{An estimate on the Ricci curvature of a submanifold and some applications}, Proc. Amer. Math. Soc. 114(4), 1051-1061 (1992).

\bibitem{LP02}
M.~Levitin and L.~Parnovski, \emph{Commutators, spectral trace identities, and universal estimates for
  eigenvalues}, J. Funct. Anal., 192(2):425--445, 2002.

\bibitem{LiP}
P.~Li, \emph{Eigenvalue estimates on homogeneous manifolds}, Comment. Math. Helv., 55(3):347--363, 1980.

\bibitem{PPW56}
L.~E. Payne, G.~P{\'o}lya, and H.~F. Weinberger,\emph{On the ratio of consecutive eigenvalues},
J. Math. Phys., 35:289--298, 1956.

\bibitem{Peterson2012}P. Peterson, \emph{Demystifying the Weitzenbock curvature operator}, http://www.math.ucla.edu/~petersen/BLWformulas.pdf.



\bibitem{SunChengYang}
H.~Sun, Q.M. Cheng, and H.C Yang, \emph{Lower order eigenvalues of Dirichlet Laplacian}, Manuscripta Math., 125:139--156, 2008.

\bibitem{Thom}C. J. Thompson, \emph{On the ratio of consecutive eigenvalues in $n$-dimensions}, Stud. Appl. Math., 48 (1969), 281-283.

\bibitem{Yang}
H.C. Yang,\emph{ An estimate of the difference between consecutive eigenvalues},
preprint IC/91/60 of the Intl. Center for Theoretical Physics,
  Trieste, 1991(revised preprint, Academia Sinica, 1995).

\bibitem{YangYau}
P.C. Yang and S.T. Yau.
\emph{Eigenvalues of the {L}aplacian of compact {R}iemann surfaces and
  minimal submanifolds}, Ann. Scuola Norm. Sup. Pisa Cl. Sci. (4), 7(1):55--63, 1980.

\end{thebibliography}
\end{document}